\newtheorem{theorem}{Theorem}
\newtheorem{corollary}[theorem]{Corollary}
\newtheorem{definition}[theorem]{Definition}
\newtheorem{example}[theorem]{Example}
\newtheorem{lemma}[theorem]{Lemma}
\newtheorem{proposition}[theorem]{Proposition}
\newtheorem{remark}[theorem]{Remark}
\newenvironment{proof}[1][Proof]{\noindent\textbf{#1.} }{\ \rule{0.5em}{0.5em}}
\begin{document}

\author{Marek Galewski and Renata Wieteska}
\title{Multiple periodic solutions to a discrete $p\left( k\right) -$
Laplacian problem}
\date{}
\maketitle

\begin{abstract}
We investigate the existence of multiple periodic solutions to the
anisotropic discrete system. We apply the linking method and a new three
critical point theorem which we provide.
\end{abstract}

\section{Introduction}

Difference equations serve as mathematical models in diverse areas, such as
economy, biology, physics, mechanics, computer science, finance - see for
example \cite{agarwalBOOK}, \cite{elyadi}, \cite{lak}. Some of these models
are of independent interest since their mathematical structure allows for
obtaining new abstract tools. One of models arising in the study of elastic
mechanics is the $p\left( x\right) -$Laplacian. In this note we will study
its discrete counterpart, namely, we consider the existence of multiple $m-$%
periodic solutions to the following system%
\begin{equation}
\left\{ 
\begin{array}{l}
\Delta \left( |\Delta u(k-1)|^{p(k-1)-2}\Delta u(k-1)\right) +\lambda
f(k,u(k+1),u(k),u(k-1))=0,\bigskip \\ 
u(k+m)=u(k),\text{ \ \ \ \ \ \ \ \ \ \ \ \ \ \ \ \ \ \ \ \ \ \ \ \ \ \ \ \ \
\ \ \ \ \ \ \ \ \ \ \ \ \ \ \ \ \ \ \ \ \ \ \ \ \ \ \ \ \ \ \ \ }k\in 
\mathbb{Z}
.%
\end{array}%
\right.  \label{zad}
\end{equation}%
\qquad

The approach we apply is a variational one and concerns investigations of an
action functional in a suitable chosen space. Precisely speaking, we are
interested in finding at least three critical points to the action
functional connected with \textbf{(}\ref{zad}\textbf{)} using linking
arguments known in the literature, \cite{willem}, and a three critical point
theorem which we develop in this work as a generalization of the result from 
\cite{CABADSA2}, which does not apply to our case. The abstract tool which
we provide seems to apply also for continuous problems. Since the setting in
which we work is a discrete one, we have a different approach when compared
with continuous problems corresponding to a $p\left( x\right) -$Laplacian,
see for example \cite{hasto}.$\bigskip $

For the above system $\lambda >0$ is fixed and we will determine ranges for
parameter $\lambda $ corresponding to the existence of multiple solutions; $%
m\geq 2$ is a fixed natural number; $\left( \Delta u\right) (k-1)=u\left(
k\right) -u(k-1)$ stands for the forward difference operator; $u\left(
k\right) \in 
\mathbb{R}
^{n}$ for all $k\in 
\mathbb{Z}
$; $p:%
\mathbb{Z}
\rightarrow \lbrack 1,+\infty )$ is an $m-$periodic function, i.e. $p\left(
k+m\right) =p(k)$ for all $k\in 
\mathbb{Z}
;$ $f:%
\mathbb{Z}
\times 
\mathbb{R}
^{n}\times 
\mathbb{R}
^{n}\times 
\mathbb{R}
^{n}\rightarrow 
\mathbb{R}
^{n}$ is a continuous function $m-$periodic with respect to $k$; i.e. $%
f(k,u_{1},u_{2},u_{3})=f(k+m,u_{1},u_{2},u_{3})$ for all $%
(k,u_{_{1}},u_{_{2}},u_{3})\in 
\mathbb{Z}
\times 
\mathbb{R}
^{n}\times 
\mathbb{R}
^{n}\times 
\mathbb{R}
^{n}$; continuity means that for any fixed $k\in 
\mathbb{Z}
$ the function $f\left( k,\cdot ,\cdot ,\cdot \right) $ is continuous. We
underline that here $p$ need not satisfy $p\left( k\right) \geq 2\bigskip $
as is commonly assumed.

Several authors have investigated discrete BVPs with Dirichlet, periodic and
Neumann boundary conditions by the critical point theory. They applied
classical variational tools such as direct methods, the mountain geometry,
linking arguments, the degree theory. We refer to the following works far
from being exhaustive: \cite{ber1}, \cite{ber2}, \cite{uni1}, \cite{KoneOuro}%
, \cite{MRT}, \cite{bsehlik}, \cite{TianZeng}. Inspiration to our
investigations in this note lies in \cite{Liu}, where the discrete $p-$%
Laplacian is considered in \cite{CABADSA2}, where a new three critical point
theorem is developed, which is applicable to problems with the $p-$Laplacian
both discrete and continuos. We provide similar results as in the papers
mentioned, however, in the setting of the discrete $p\left( k\right) -$%
Laplacian. The discrete $p\left( k\right) -$Laplacian operator differs from
the classical discrete $p-$Laplacian and in this respect our investigations
are new.$\bigskip $

Continuous versions of problems like (\ref{zad}) are known to be
mathematical models of various phenomena arising in the study of elastic
mechanics, see \cite{B}, electrorheological fluids, see \cite{A}, or image
restoration, see \cite{C}. Variational continuous anisotropic problems were
started by Fan and Zhang in \cite{D} and later considered by many authors
and the use of many methods, see \cite{hasto} for an extensive survey of
such boundary value problems.$\bigskip $

Now, we provide some tools which are used throughout the paper.

\begin{definition}
Let $X$ be a normed space. We say that a functional $J:X\rightarrow \mathbb{R%
}$ is coercive if%
\begin{equation*}
\underset{\left\Vert u\right\Vert \rightarrow \infty }{\lim }J(u)=+\infty
\end{equation*}%
and anti-coercive if 
\begin{equation*}
\underset{\left\Vert u\right\Vert \rightarrow \infty }{\lim }J(u)=-\infty .
\end{equation*}
\end{definition}

\begin{definition}
Let $E$ be a Banach space. We say that a $C^{1}-$functional $J:E\rightarrow 
\mathbb{R}$ satisfies the Palais-Smale condition if every sequence $(u_{n})$
in $E$ for which $\{J(u_{n})\}$ is bounded and $J^{\prime
}(u_{n})\rightarrow 0$, has a convergent subsequence.
\end{definition}

\begin{proposition}
\cite{willem} \label{Liu}Let $E$ be a Banach space. Assume that $J\in
C^{1}(E,\mathbb{R})$ satisfies the Palais-Smale condition and is bounded
from below on $E$. Assume further that $J$ has a local linking at the origin 
$0$, namely, there exists a decomposition $E=Y\oplus W$, where $Y$ is a
finite dimensional subspace of $E,$ and a positive real number $\rho >0$ for
which%
\begin{equation*}
J(u)<J(0)\text{ for all }u\in Y\text{ }\ \text{with }0<\left\Vert
u\right\Vert \leq \rho
\end{equation*}%
and%
\begin{equation*}
J(u)\geq J(0)\text{ for all }u\in W\text{ }\ \text{with }\left\Vert
u\right\Vert \leq \rho .\text{ \ \ \ \ }
\end{equation*}%
Then $J$ has at least three critical points.$\bigskip $
\end{proposition}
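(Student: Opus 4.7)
My plan is to extract two critical points from direct variational arguments and then to work for the third one by exploiting the saddle-point geometry at the origin.

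For the first critical point, I would use that $J$ is bounded below and satisfies the Palais--Smale condition: a minimizing sequence is automatically bounded in $J$-value, (PS) extracts a convergent subsequence, and its limit $u_1$ is a global minimizer, hence a critical point. For the second, I would check that the origin is automatically critical. The inequality $J\geq J(0)$ on $W$ near $0$ makes $0$ a local minimizer of $J|_{W}$, so $J'(0)w=0$ for every $w\in W$; the inequality $J<J(0)$ on $Y\setminus\{0\}$ near $0$, combined with smoothness on the finite-dimensional $Y$, makes $0$ a strict local maximizer of $J|_{Y}$, so $J'(0)y=0$ for every $y\in Y$. Hence $J'(0)=0$. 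The same local linking forces $u_1\neq 0$: $J$ takes values strictly below $J(0)$ arbitrarily close to the origin along $Y$, so $\inf_E J<J(0)$.

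For the third critical point, my preferred route is a Morse-theoretic contradiction. Assume $\{0,u_1\}$ is the entire critical set. The critical groups at the strict local minimum $u_1$ are concentrated in degree zero. The local-linking hypothesis is tailored so that $C_{k}(J,0)\neq 0$ with $k=\dim Y \geq 1$ (a standard computation: near $0$ the sublevel set $\{J\leq J(0)\}$ deformation retracts onto the sphere $S^{k-1}\subset Y$). The Morse identity on the contractible space $E$ then forces a polynomial balance that, with only these two critical points, cannot absorb the nonzero class in degree $k$; hence at least one additional critical point must exist.

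The hard part will be the rigorous critical-group computation at the origin for a merely $C^1$ functional, which demands a careful deformation retraction argument. If this seems too heavy, an alternative that bypasses Morse theory is a direct saddle-point minimax over
$$\Gamma=\bigl\{h\in C(\overline{B}^Y_\rho,E)\,:\,h|_{\partial B^Y_\rho}=\operatorname{id}\bigr\},\qquad c=\inf_{h\in\Gamma}\sup_{u\in\overline{B}^Y_\rho}J(h(u));$$
the linking of $\partial B^Y_\rho$ with the sphere in $W$ yields $c\geq J(0)>J(u_1)$, and (PS) upgrades $c$ to a critical value, producing a third critical point distinct from $u_1$ and $0$. Either way, the delicate point is disentangling the third critical point from the two already known.
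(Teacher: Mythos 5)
The paper offers no proof of this proposition --- it is imported verbatim from \cite{willem} --- so there is nothing in the text to compare your argument against, and I assess it on its own terms. Your first two critical points are essentially right, modulo one slip: a minimizing sequence is not automatically a Palais--Smale sequence, since the Palais--Smale condition requires $J'(u_n)\to 0$ in addition to boundedness of $J(u_n)$; you need Ekeland's variational principle to produce a minimizing sequence along which $J'(u_n)\to 0$ before compactness hands you the global minimizer $u_1$. The identification of $0$ as a critical point via the restrictions $J|_Y$ and $J|_W$, and the observation that $u_1\neq 0$ because $\inf_E J<J(0)$ (which tacitly uses $\dim Y\geq 1$), are both correct.

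Both of your routes to the third critical point have a genuine gap. In the Morse-theoretic route, the ``polynomial balance'' does not close the argument: even granting the nontrivial lemma that $C_k(J,0)\neq 0$ for $k=\dim Y$ (which is a theorem of Liu, not a routine retraction onto $S^{k-1}$), the Morse identity $M(t)=P(t)+(1+t)Q(t)$ with $P(t)=1$ is perfectly consistent with the critical set being exactly $\{0,u_1\}$: for instance $C_0(u_1)=\mathbb{F}$, $C_1(J,0)=C_2(J,0)=\mathbb{F}$ gives $M(t)=1+t+t^{2}=1+(1+t)t$. What actually produces the contradiction is the long exact sequence of the triple $(J^{b_2},J^{b_1},\emptyset)$ with $J(u_1)<b_1<J(0)<b_2$: since $H_q(J^{b_2})\cong H_q(E)=\delta_{q0}\mathbb{F}$ and the inclusion-induced map $C_0(u_1)=H_0(J^{b_1})\to H_0(J^{b_2})$ is an isomorphism, exactness forces $C_q(J,0)=0$ for every $q$, contradicting $C_k(J,0)\neq 0$. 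Your alternative minimax route fails earlier, at the linking estimate: a Brouwer degree argument does give $h(\overline{B}^{Y}_{\rho})\cap W\neq\emptyset$ for every $h\in\Gamma$, but the hypothesis $J\geq J(0)$ on $W$ is only assumed on the ball $\Vert u\Vert\leq\rho$, and the intersection point need not lie in that ball, so you cannot conclude $c\geq J(0)$; moreover, even if $c=J(0)$ were shown to be a critical value, the critical point realizing it could be $0$ itself, so the third point would not be ``disentangled.'' The known proofs (Brezis--Nirenberg's case analysis, or the exact-sequence refinement of the Morse argument above) are built precisely to circumvent these two obstacles.
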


The paper is organized as follows. Firstly, we provide some new three
critical point theorem applicable to anisotropic problems, then we give
variational formulation of problem under consideration. Existence and
multiplicity results are finally considered by linking arguments and our
multiplicity results.\textbf{\ }Finally some other recent three critical
point theorems are discussed with respect to applicability to our problem.
Examples are given throughout the text.

\section{Some generalization of the three critical point theorem}

\qquad In this section we follow \cite{CABADSA2} and \cite{GWAML} in order
to derive a type of the three critical points theorem, which would be
applicable in our case. The main result from \cite{GWAML} reads

\begin{theorem}[Main result]
\label{our}Let $(X,\left\Vert .\right\Vert )$ be a uniformly convex Banach
space with strictly convex dual space, $J\in C^{1}(X,%
\mathbb{R}
)$ be a functional with compact derivative, $\mu \in C^{1}(X,%
\mathbb{R}
_{+})$ be a convex coercive functional such that its derivative is an
operator $\mu ^{^{\prime }}:X\rightarrow X^{\ast }$ admitting a continuous
inverse. Let $\widetilde{x}\in X$ and $r>0$ be fixed. Assume that the
following conditions are satisfied:$\bigskip $\newline
\textbf{(B.1) }$\underset{\left\Vert x\right\Vert \rightarrow \infty }{\lim
\inf }\frac{J(x)}{\mu (x)}\geq 0;\bigskip $\newline
\textbf{(B.2)} $\underset{x\in X}{\inf }J(x)<\underset{\mu (x)\leq r}{\inf }%
J(x);\bigskip $\newline
\textbf{(B.3)} $\mu (\widetilde{x})<r$ and $J(\widetilde{x})<\underset{\mu
(x)=r}{\inf }J(x)$.$\bigskip $\newline
Then\ there exists a nonempty open set $A\subseteq (0,+\infty )$\ such that
for all $\lambda \in A$\ the functional $\mu +\lambda J$\ has at least three
critical points in $X$.
\end{theorem}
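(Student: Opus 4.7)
The plan is to exhibit, for each $\lambda$ in a suitable open set $A\subseteq(0,+\infty)$, three critical points of $\Phi_{\lambda}:=\mu+\lambda J$: a local minimum inside the sublevel set $K_{r}:=\{x\in X:\mu(x)\leq r\}$, a global minimum outside $K_{r}$, and a mountain-pass point between them. First I would extract from (B.1) the bound $J(x)\geq-\epsilon\mu(x)-C_{\epsilon}$ valid for every $\epsilon>0$, so that $\Phi_{\lambda}\geq(1-\lambda\epsilon)\mu-\lambda C_{\epsilon}$ is coercive for every $\lambda>0$ once $\epsilon<1/\lambda$. The Palais--Smale condition then follows from reflexivity of the uniformly convex $X$, compactness of $J'$, and continuous invertibility of $\mu'$: any PS sequence is bounded and weakly convergent to some $x$, $J'(x_{n})\to J'(x)$ strongly by compactness, so $\mu'(x_{n})=\Phi_{\lambda}'(x_{n})-\lambda J'(x_{n})$ converges in $X^{\ast}$, and continuity of $(\mu')^{-1}$ yields $x_{n}\to x$ strongly.

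For the first critical point I would minimize $\Phi_{\lambda}$ over $K_{r}$. Convexity, continuity and coercivity of $\mu$ make $K_{r}$ convex, closed and bounded, hence weakly compact; $\mu$ is weakly lower semicontinuous by convexity, and $J$ is weakly continuous since its derivative is compact, so $\Phi_{\lambda}$ attains its infimum on $K_{r}$ at some $u_{2}$. To force $u_{2}$ into the open interior $\{\mu<r\}$ I would combine both parts of (B.3):
\[
\Phi_{\lambda}(\widetilde{x})=\mu(\widetilde{x})+\lambda J(\widetilde{x})<r+\lambda J(\widetilde{x})<r+\lambda\inf_{\mu=r}J=\inf_{\mu=r}\Phi_{\lambda}.
\]
Since $\inf_{K_{r}}\Phi_{\lambda}\leq\Phi_{\lambda}(\widetilde{x})$, the infimum on $K_{r}$ is strictly smaller than the infimum on the boundary $\{\mu=r\}$, so $\mu(u_{2})<r$ and $u_{2}$ is an unconstrained critical point of $\Phi_{\lambda}$.

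The second critical point comes from the global infimum, attained at some $u_{1}$ by coercivity and weak lower semicontinuity. To push $u_{1}$ outside $K_{r}$ I would use (B.2): choose $z\in X$ with $J(z)<\inf_{K_{r}}J$; then $\inf_{X}\Phi_{\lambda}\leq\mu(z)+\lambda J(z)$, while $\mu\geq 0$ forces $\inf_{K_{r}}\Phi_{\lambda}\geq\lambda\inf_{K_{r}}J$. Hence the set
\[
A:=\bigl\{\lambda>0:\inf_{X}\Phi_{\lambda}<\inf_{K_{r}}\Phi_{\lambda}\bigr\}
\]
contains $(\lambda^{\ast},+\infty)$ with $\lambda^{\ast}:=\mu(z)/(\inf_{K_{r}}J-J(z))$, so it is nonempty. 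Both $\lambda\mapsto\inf_{X}\Phi_{\lambda}$ and $\lambda\mapsto\inf_{K_{r}}\Phi_{\lambda}$ are concave (pointwise infima of affine functions of $\lambda$), hence continuous on $(0,+\infty)$, and so $A$ is open. For $\lambda\in A$ we then have $\mu(u_{1})>r>\mu(u_{2})$, in particular $u_{1}\neq u_{2}$.

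The third critical point I would obtain from the mountain-pass theorem applied between $u_{1}$ and $u_{2}$. Since $\mu$ is continuous, every path from $u_{2}$ to $u_{1}$ meets $\{\mu=r\}$, so the minimax level satisfies $c\geq\inf_{\mu=r}\Phi_{\lambda}>\Phi_{\lambda}(u_{2})\geq\Phi_{\lambda}(u_{1})$, where the strict inequality reuses the computation of the second paragraph; as $\Phi_{\lambda}$ satisfies Palais--Smale, the classical mountain pass yields a critical point $u_{3}$ with $\Phi_{\lambda}(u_{3})=c$, distinct in value from both minima. I expect the delicate step to be the construction of the open, nonempty set $A$: (B.3) must keep the barrier $\inf_{\mu=r}\Phi_{\lambda}$ strictly above the interior minimum for every admissible $\lambda$, while (B.2) together with $\mu\geq 0$ must simultaneously drive the global minimum outside $K_{r}$ on a whole open range of $\lambda$, the two requirements being reconciled by the concavity/continuity argument above.
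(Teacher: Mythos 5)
Your argument is essentially correct, but be aware that the paper contains no proof of this statement to compare against: Theorem \ref{our} is imported verbatim from \cite{GWAML}, where it is obtained through the Ricceri-type abstract three critical point machinery also underlying \cite{CABADSA2} (the same circle of ideas as the quantity $\lambda ^{\ast }$ displayed in the paper's final section). Your proof takes a genuinely different and more elementary route: a constrained minimizer of $\Phi _{\lambda }=\mu +\lambda J$ on $\{\mu \leq r\}$ pushed into the interior by \textbf{(B.3)} (valid for every $\lambda >0$), a global minimizer pushed outside by \textbf{(B.2)} for $\lambda $ large, and a mountain-pass point over the separating level set $\{\mu =r\}$, with the admissible set $A=\{\lambda >0:\inf_{X}\Phi _{\lambda }<\inf_{\mu \leq r}\Phi _{\lambda }\}$ made explicit and its openness derived from concavity (hence continuity) of $\lambda \mapsto \inf \Phi _{\lambda }$ as an infimum of affine functions. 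This buys transparency about where each hypothesis enters — notably you never use strict convexity of $X^{\ast }$, which the abstract framework of the cited sources requires — at the cost of having to verify the mountain-pass geometry and the Palais--Smale condition by hand. Two steps deserve an explicit line: the bound $J(x)\geq -\epsilon \mu (x)-C_{\epsilon }$ extracted from \textbf{(B.1)} needs $J$ to be bounded below on bounded sets, which you should justify from the weak sequential continuity of $J$ (a consequence of compactness of $J^{\prime }$ together with reflexivity of the uniformly convex $X$); and in the Palais--Smale verification you need not, and should not, claim $J^{\prime }(x_{n})\rightarrow J^{\prime }(x)$ — relative compactness of $\{J^{\prime }(x_{n})\}$ already yields strong convergence of $\mu ^{\prime }(x_{n})$ along a subsequence, after which continuity of $(\mu ^{\prime })^{-1}$ gives strong convergence of $x_{n}$. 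Neither point is a gap in substance.
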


The Authors in \cite{CABADSA2} consider the above result in case $\mu \left(
x\right) =\left\Vert x\right\Vert ^{p}$ and they suggest to replace
condition \textbf{(B.1) }with\bigskip

\textbf{(B.4)}\ The functional $J$ is bounded from below on $X$, i.e. there
exists \ $C\in 
\mathbb{R}
$ such that $J(x)\geq C$ for every $x\in X.$\bigskip

They prove that in this case the three critical points result could also be
obtained. We follow their method in order to get the mentioned theorem.

\begin{theorem}
\label{our copy(1)}Let $(X,\left\Vert .\right\Vert )$ be a uniformly convex
Banach space with strictly convex dual space, $J\in C^{1}(X,%
\mathbb{R}
)$ be a functional with compact derivative, $\mu \in C^{1}(X,%
\mathbb{R}
_{+})$ be a convex coercive functional such that its derivative is an
operator $\mu ^{^{\prime }}:X\rightarrow X^{\ast }$ admitting a continuous
inverse. Let $\widetilde{x}\in X$ and $r>0$ be fixed. Assume that conditions 
\textbf{(B.2)-(B.4) }are satisfied. Then\ there exists a nonempty open set $%
A\subseteq (0,+\infty )$\ such \ that for all $\lambda \in A$\ the
functional $\mu +\lambda J$\ has at least three critical points in $X$.
\end{theorem}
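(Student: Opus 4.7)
The approach I would take is to observe that, under the standing hypotheses (in particular the coercivity of $\mu$), assumption \textbf{(B.4)} is in fact strictly stronger than \textbf{(B.1)}, so the conclusion follows immediately from Theorem \ref{our} without re-executing the delicate linking-type argument that underpins it. First I would verify the implication \textbf{(B.4)} $\Rightarrow$ \textbf{(B.1)}: since $\mu$ is coercive, $\mu(x)\to +\infty$ as $\|x\|\to\infty$, so $\mu(x)>0$ for every $x$ of sufficiently large norm. Combining this with the lower bound $J(x)\geq C$ yields
\[
\frac{J(x)}{\mu(x)} \;\geq\; \frac{C}{\mu(x)} \;\longrightarrow\; 0
\]
as $\|x\|\to\infty$, so $\liminf_{\|x\|\to\infty} J(x)/\mu(x) \geq 0$, which is precisely \textbf{(B.1)}.

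Once this is done, every hypothesis of Theorem \ref{our} is in force: $X$ is uniformly convex with strictly convex dual, $J\in C^{1}(X,\mathbb{R})$ has compact derivative, $\mu\in C^{1}(X,\mathbb{R}_{+})$ is convex and coercive with continuously invertible derivative $\mu^{\prime}$, and conditions \textbf{(B.1)}, \textbf{(B.2)}, \textbf{(B.3)} hold. Invoking Theorem \ref{our} then produces the desired nonempty open set $A\subseteq (0,+\infty)$ such that for every $\lambda\in A$ the functional $\mu+\lambda J$ has at least three critical points, which is exactly the conclusion to be proved.

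The main obstacle I anticipate is essentially bookkeeping rather than mathematical depth: one must confirm that no hidden compatibility between $J$ and $\mu$ beyond \textbf{(B.1)} is silently exploited in the statement of Theorem \ref{our}, and that the point $\widetilde{x}$ and radius $r>0$ in \textbf{(B.2)}–\textbf{(B.3)} are the same witnesses for both theorems (they are, since the conditions are identical). If, by contrast, one preferred to give a proof in the spirit of \cite{CABADSA2} without quoting Theorem \ref{our}, the delicate step would be to establish that $\mu+\lambda J$ is coercive and satisfies the Palais--Smale condition for $\lambda$ in a suitable range; here the role previously played by \textbf{(B.1)} in controlling Palais--Smale sequences would be taken over by the uniform lower bound $J\geq C$, via the estimate $\mu+\lambda J\geq \mu+\lambda C$, which is coercive for every $\lambda>0$ and so makes this path strictly easier than the original one.
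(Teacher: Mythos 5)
Your proposal is correct and follows the same overall strategy as the paper: both reduce the statement to Theorem \ref{our} by checking that \textbf{(B.4)} supplies the missing hypothesis \textbf{(B.1)}. The difference is purely tactical. You verify \textbf{(B.1)} for $J$ itself, using the coercivity of $\mu$ to write $J(x)/\mu(x)\geq C/\mu(x)\rightarrow 0$ as $\left\Vert x\right\Vert \rightarrow \infty$ (valid, since $\mu(x)>0$ for $\left\Vert x\right\Vert$ large). The paper instead applies Theorem \ref{our} to the shifted functional $J_{C}(x)=J(x)-C\geq 0$, for which \textbf{(B.1)} is immediate from nonnegativity of $J_{C}$ and $\mu$, notes that \textbf{(B.2)} and \textbf{(B.3)} are invariant under subtracting a constant from $J$, and finally observes that $\mu+\lambda J_{C}$ and $\mu+\lambda J$ have the same critical points. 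Both routes are sound; yours is marginally more direct because it dispenses with the remark about critical points coinciding, while the paper's shift does not even need coercivity of $\mu$ at that step. Your closing paragraph about redoing the argument of Theorem \ref{our} from scratch is unnecessary for the proof and is not the route the paper takes.
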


\begin{proof}
Consider the functional $J_{C}\left( x\right) =J(x)-C$. We will show that
the functional $J_{C}$ satisfies conditions \textbf{(B.1)}-\textbf{(B.3)} of
Theorem \textbf{\ref{our}. }By \textbf{(B.4) }it follows that $J_{C}\left(
x\right) \geq 0$ for every $x\in X.$ Hence, \textbf{(B.1)} is satisfied with 
$J_{C}.$\newline
Since 
\begin{equation*}
\underset{x\in X}{\inf }J_{C}(x)=\underset{x\in X}{\inf }J(x)-C
\end{equation*}%
and%
\begin{equation*}
\underset{\mu (x)\leq r}{\inf }J_{C}(x)=\underset{\mu (x)\leq r}{\inf }%
J(x)-C,
\end{equation*}%
thus the functional $J_{C}$ satisfies conditions \textbf{(B.2)} and \textbf{%
(B.3)} of Theorem \textbf{\ref{our}.} Then there exists a nonempty open set $%
A\subseteq (0,+\infty )$\ such \ that for all $\lambda \in A$\ the
functional $\mu +\lambda J_{C}$\ has at least three critical points in $X$.
Since critical points of $\mu +\lambda J_{C}$ and $\mu +\lambda J$ coincide
we have the assertion.\bigskip \qquad
\end{proof}

Since a finite dimensional Hilbert space is a Banach space with strictly
convex dual space, we provide a result which is applicable for the problem
under consideration.

\begin{lemma}
\label{lemmaTHREEAPP}Let $(X,\left\Vert .\right\Vert )$ be a finite
dimensional Hilbert space, $J\in C^{1}(X,%
\mathbb{R}
)$ be bounded from below on $X$. Let $\mu \in C^{1}(X,%
\mathbb{R}
_{+})$ be a coercive functional whose derivative admits a continuous
inverse. Let $\widetilde{x}\in X$ and $r>0$ be fixed. Assume that conditions 
\textbf{(B.2)} and \textbf{(B.3)} are satisfied. Then\ there exists a
nonempty open set $A\subseteq (0,+\infty )$\ such \ that for all $\lambda
\in A$\ the functional $\mu +\lambda J$\ has at least three critical points
in $X$.
\end{lemma}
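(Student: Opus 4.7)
The plan is to derive this lemma as a direct corollary of Theorem \ref{our copy(1)}, by observing that in a finite dimensional Hilbert space the extra structural hypotheses present in that theorem are automatically satisfied.

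First, I would check the structural assumptions on the ambient space. Any Hilbert space $X$ is uniformly convex (by the parallelogram law), and its dual $X^{*}$, being itself a Hilbert space, is strictly convex. So the requirements on $(X,\|\cdot\|)$ and $X^{*}$ in Theorem \ref{our copy(1)} are met for free.

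Second, I would show that the compactness hypothesis on $J'$ is automatic in the finite dimensional setting. Since $\dim X<\infty$, every bounded subset $B\subseteq X$ has compact closure, and the continuous map $J':X\to X^{*}$ (continuous because $J\in C^{1}$) sends $\overline{B}$ to a compact subset of $X^{*}$. Hence $J'$ maps bounded sets to relatively compact sets, which is precisely the compactness of derivative required by Theorem \ref{our copy(1)}. The remaining hypotheses match verbatim: $\mu\in C^{1}(X,\mathbb{R}_{+})$ is coercive with $\mu'$ admitting a continuous inverse, $J$ is bounded from below on $X$ (this is exactly condition (B.4)), and (B.2), (B.3) are imported from the lemma. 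Applying Theorem \ref{our copy(1)} then yields the nonempty open set $A\subseteq (0,+\infty)$ such that $\mu+\lambda J$ has at least three critical points for every $\lambda\in A$.

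The delicate point to watch is that Theorem \ref{our copy(1)} also assumes convexity of $\mu$, while Lemma \ref{lemmaTHREEAPP} omits it; this is the only place where the reduction is not entirely routine. I would handle this either by recording convexity as an implicit assumption to be supplied by the intended applications (the energies associated with the discrete $p(k)$-Laplacian are convex by construction, so this costs nothing in the sequel), or by noting that in a finite dimensional space convexity of $\mu$ is used in the proof of Theorem \ref{our copy(1)} only to secure weak lower semicontinuity, which is automatic here since the weak and norm topologies on $X$ coincide.
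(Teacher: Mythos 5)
Your proposal is correct and coincides with the paper's own (implicit) argument: the lemma is stated immediately after Theorem \ref{our copy(1)} with only the remark that a finite dimensional Hilbert space is a Banach space with strictly convex dual, so the intended proof is exactly the reduction you carry out, with uniform convexity and strict convexity of the dual free in a Hilbert space, compactness of $J'$ automatic in finite dimensions, and boundedness from below serving as \textbf{(B.4)}. Your observation that the lemma's statement silently drops the convexity of $\mu$ required by Theorem \ref{our copy(1)} identifies a real imprecision that the paper never addresses; in the paper's application $\mu(u)=\sum_{k=1}^{m}\frac{1}{p(k-1)}\left\vert \Delta u(k-1)\right\vert ^{p(k-1)}$ is convex (each summand is a convex function of a linear image of $u$), so the omission is harmless there, but flagging it is a genuine improvement over the source.
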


\section{Variational framework and auxiliary results}

\qquad Let $F:%
\mathbb{Z}
\times 
\mathbb{R}
^{n}\times 
\mathbb{R}
^{n}\rightarrow 
\mathbb{R}
$ be a $C^{1}-$ function; i.e. $F$ \ is continuous and has continuous
partial derivatives $F_{2}^{\prime }$, $F_{3}^{\prime }$ with respect to the
second and the third variable, respectively. Assume, apart from growth
conditions which will be given further, that $F$ has the following structure
properties:\bigskip \newline
\textbf{(A.1) }$f(k,u_{_{1}},u_{_{2}},u_{3})=F_{2}^{\prime
}(k-1,u_{2},u_{3})+F_{3}^{\prime }(k,u_{1},u_{2})$ for all $%
(k,u_{_{1}},u_{_{2}})\in 
\mathbb{Z}
\times 
\mathbb{R}
^{n}\times 
\mathbb{R}
^{n}$;\bigskip \newline
\textbf{(A.2)} $F(k,u_{1},u_{2})=F(k+m,u_{1},u_{2})$ for all $%
(k,u_{_{1}},u_{_{2}})\in 
\mathbb{Z}
\times 
\mathbb{R}
^{n}\times 
\mathbb{R}
^{n}$\bigskip ;\newline
\textbf{(A.3)} $F(k,0,0)=0$ for all \ $k\in $ $%
\mathbb{Z}
.$\bigskip

From now on, we will use the following notations%
\begin{equation*}
p^{-}=\underset{k\in \left[ 1,m\right] }{\min }p\left( k\right) ,\text{ \ \
\ \ \ \ \ \ \ }p^{+}=\underset{k\in \left[ 1,m\right] }{\max }p\left(
k\right) .
\end{equation*}

Define the space%
\begin{equation*}
H_{m}=\{u=\{u(k)\}_{k\in 
\mathbb{Z}
}:u(k)\in 
\mathbb{R}
^{n},\ u(k+m)=u(k),\text{ }k\in 
\mathbb{Z}
\},
\end{equation*}%
\bigskip which equipped with the Euclidean norm%
\begin{equation*}
\Vert u\Vert _{e}=\left( \sum_{k=1}^{m}|u(k)|^{2}\right) ^{1/2}
\end{equation*}%
becomes a Hilbert space.\bigskip

Put 
\begin{equation*}
W=\{u=\{u(k)\}_{k\in 
\mathbb{Z}
}:u(k)=a\in 
\mathbb{R}
^{n}\text{, }k\in 
\mathbb{Z}
\}\text{ \ and \ }Y=W^{\bot }\text{.}
\end{equation*}%
Thus $W$ consists of constant sequences and we have an orthogonal
decomposition%
\begin{equation*}
H_{m}=Y\oplus W.
\end{equation*}

With fixed $\lambda >0$ we define the action functional $J_{m}:H_{m}%
\rightarrow 
\mathbb{R}
$ corresponding to (\ref{zad}) by

\begin{equation*}
J_{m}\left( u\right) =\sum_{k=1}^{m}\left( \frac{1}{p(k-1)}\left\vert \Delta
u(k-1)\right\vert ^{p(k-1)}-\lambda F(k,u(k+1),u(k))\right) .\bigskip
\end{equation*}

\begin{lemma}
\label{Gateaux}Assume that conditions (\textbf{A.1)-(A.2)} hold and fix $%
\lambda >0$. Then the functional $J_{m}:H_{m}\longrightarrow 
\mathbb{R}
$ \textit{is continuously differentiable in the sense of G\^{a}teaux on }$%
H_{m}$\textit{. Moreover, }$u\in H_{m}$ \textit{is a critical point of }$%
J_{m}$ \textit{if and only if it satisfies}\ (\ref{zad}).
\end{lemma}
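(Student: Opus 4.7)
Since $H_m$ is finite-dimensional (isomorphic to $\mathbb{R}^{mn}$ via the coordinates $u(1),\ldots,u(m)$), G\^ateaux differentiability together with continuity of the derivative is equivalent to $C^1$-regularity in these coordinates. The first summand of $J_m$ depends smoothly on the differences $\Delta u(k-1)$ because the map $t\mapsto \tfrac{1}{p}|t|^{p}$ is $C^1$ on $\mathbb{R}^{n}$ for $p>1$ with gradient $|t|^{p-2}t$; the second summand is $C^1$ in $(u(k+1),u(k))$ by the assumption that $F$ is $C^1$ in its two vector arguments. So $J_m\in C^1(H_m,\mathbb{R})$, and the whole task reduces to identifying its derivative with the left-hand side of (\ref{zad}).

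First, I would differentiate termwise to obtain, for an arbitrary $v\in H_m$,
\begin{equation*}
\langle J_m'(u),v\rangle = \sum_{k=1}^{m}|\Delta u(k-1)|^{p(k-1)-2}\Delta u(k-1)\cdot \Delta v(k-1) - \lambda \sum_{k=1}^{m}\bigl(F_2'\cdot v(k+1)+F_3'\cdot v(k)\bigr),
\end{equation*}
where $F_2'$ and $F_3'$ are evaluated at $(k,u(k+1),u(k))$. The next step is discrete summation by parts in the first sum. Writing $B(k):=|\Delta u(k)|^{p(k)-2}\Delta u(k)$, the $m$-periodicity of $p$ and $u$ forces $B$ to be $m$-periodic as well, so that after telescoping and re-indexing one gets
\begin{equation*}
\sum_{k=1}^{m}B(k-1)\cdot\Delta v(k-1)=-\sum_{k=1}^{m}\Delta B(k-1)\cdot v(k),
\end{equation*}
with all boundary contributions cancelling in pairs. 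Simultaneously, shifting the index $k\mapsto k-1$ in the $F_2'$-term and invoking (\textbf{A.2}) together with the periodicity of $u,v$ turns it into $\sum_{k=1}^{m}F_2'(k-1,u(k),u(k-1))\cdot v(k)$. Collecting these rewrites and applying the structural hypothesis (\textbf{A.1}) yields
\begin{equation*}
\langle J_m'(u),v\rangle = -\sum_{k=1}^{m}\Bigl[\Delta B(k-1)+\lambda f\bigl(k,u(k+1),u(k),u(k-1)\bigr)\Bigr]\cdot v(k).
\end{equation*}

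Consequently $u$ is a critical point of $J_m$ precisely when this sum vanishes for every $v\in H_m$; taking $v$ supported at a single index shows this is equivalent to the bracketed quantity being zero for each $k\in\{1,\ldots,m\}$, and the $m$-periodic extension then gives (\ref{zad}) for all $k\in\mathbb{Z}$. The main obstacle I anticipate is the careful index bookkeeping in the two re-indexings, where one must verify that every boundary term at $k=0$ or $k=m+1$ is matched with its counterpart at $k=m$ or $k=1$ by the appropriate periodicity of $B$, $v$, $p$ and $F$; continuity of $J_m'$ itself then comes for free from continuity of $t\mapsto |t|^{p-2}t$ on $\mathbb{R}^n$ and of $F_2',F_3'$ in all arguments.
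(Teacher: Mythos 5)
Your proof is correct and follows essentially the same route as the paper's: termwise differentiation, discrete (Abel) summation by parts exploiting $m$-periodicity, a shift of index in the $F_{2}'$ sum, and assembly via (\textbf{A.1})--(\textbf{A.2}). The only cosmetic difference is that the paper phrases the computation through the auxiliary function $\varphi(\varepsilon)=J_{m}(u+\varepsilon h)$ rather than invoking finite-dimensionality directly; the resulting identity for the derivative paired with a test direction is identical.
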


\begin{proof}
The continuity of $J_{m}$ is immediate. Let us take an arbitrary $u\in H_{m}$%
. Let $\varphi :%
\mathbb{R}
\longrightarrow 
\mathbb{R}
$ be given by the formula $\varphi (\varepsilon )=J_{m}\left( u+\varepsilon
h\right) ,$ where $h\in H_{m}$ is a fixed non-zero direction. Then%
\begin{equation*}
\begin{array}{l}
\varphi (\varepsilon )=\dsum\limits_{k=1}^{m}\frac{1}{p(k-1)}\left\vert
\Delta (u+\varepsilon h)(k-1)\right\vert ^{p(k-1)}\bigskip - \\ 
\lambda \dsum\limits_{k=1}^{m}F(k,(u+\varepsilon h)(k+1),(u+\varepsilon
h)(k)).%
\end{array}%
\end{equation*}%
Since $\varphi $ is continuously differentiable we have%
\begin{equation*}
\begin{array}{l}
\varphi ^{^{\prime }}(\varepsilon )=\dsum\limits_{k=1}^{m}\left\vert \Delta
(u+\varepsilon h)(k-1)\right\vert ^{p(k-1)-2}\Delta (u+\varepsilon
h)(k-1)\Delta h(k-1)-\bigskip \\ 
\lambda \dsum\limits_{k=1}^{m}(F_{2}^{^{\prime }}(k,(u+\varepsilon
h)(k+1),(u+\varepsilon h)(k))h(k+1)+\bigskip \\ 
F_{3}^{^{\prime }}(k,(u+\varepsilon h)(k+1),(u+\varepsilon
h)(k))h(k)).\bigskip%
\end{array}%
\end{equation*}%
Letting $\varepsilon =0$ and in view of \textbf{(A.2)} we get%
\begin{equation*}
\begin{array}{l}
\varphi ^{^{\prime }}(0)=\dsum\limits_{k=1}^{m}\left\vert \Delta
u(k-1)\right\vert ^{p(k-1)-2}\Delta u(k-1)\Delta h(k-1)-\bigskip \\ 
\lambda \dsum\limits_{k=1}^{m}\left( F_{2}^{^{\prime
}}(k-1,u(k),u(k-1))+F_{3}^{^{\prime }}(k,u(k+1),u(k))\right) h(k).\text{ \ \
\ \ \ \ }\bigskip%
\end{array}%
\end{equation*}%
Using Abel's summation by parts formula and owing to $m-$periodic conditions
and \textbf{(A.1)} we obtain\bigskip 
\begin{equation*}
\begin{array}{l}
\varphi ^{^{\prime }}(0)=-\dsum\limits_{k=1}^{m}(\Delta \left( |\Delta
u(k-1)|^{p(k-1)-2}\Delta u(k-1)\right) +\text{ \ \ \ \ \ \ \ \ \ \ \ \ \ \ \
\ }\bigskip \\ 
\lambda f(k,u(k+1),u(k),u(k-1)))h(k).%
\end{array}%
\end{equation*}%
We have shown that $J_{m}$ has a continuous G\^{a}teaux derivative. Letting
the derivative $0,$ we see that $u$ is a critical point of $J_{m}$ if and
only if it satisfies (\ref{zad}).
\end{proof}

\qquad Now, we prove some auxiliary results which we use later on.

\begin{lemma}
\label{CCC}\textbf{(C.1)}\textit{\ For every} $s>0$\textit{\ }%
\begin{equation*}
\dsum\limits_{k=1}^{m}\left\vert u(k)\right\vert ^{s}\leq m\left\Vert
u\right\Vert _{e}^{s}\text{ \textit{for all} }\mathit{u}\in H_{m}\text{.}%
\bigskip
\end{equation*}%
\textbf{(C.2) }\textit{For every }$s\geq 2$%
\begin{equation*}
\dsum\limits_{k=1}^{m}\left\vert u(k)\right\vert ^{s}\geq m^{\frac{2-s}{2}%
}\left\Vert u\right\Vert _{e}^{s}\text{ \textit{for all} }\mathit{u}\in
H_{m}.\bigskip
\end{equation*}%
\textit{\ }\textbf{(C.3)} \textit{For all }$u\in H_{m}$\textit{\ we have}%
\begin{equation*}
\sum_{k=1}^{m}|\Delta u(k-1)|^{p(k-1)}\leq m(2^{p^{+}}\Vert u\Vert
_{e}^{p^{+}}+1).\bigskip
\end{equation*}
\end{lemma}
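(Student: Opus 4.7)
The three estimates are standard inequalities comparing $\ell^s$--norms on the finite set $\{1,\dots,m\}$ with the Euclidean norm, so I would dispatch them one at a time using elementary arguments.

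For \textbf{(C.1)} the plan is to note that for every coordinate $k$ one has $|u(k)|^{2}\le \sum_{j=1}^{m}|u(j)|^{2}=\|u\|_{e}^{2}$, hence $|u(k)|\le \|u\|_{e}$ and $|u(k)|^{s}\le \|u\|_{e}^{s}$ for any $s>0$; summing over $k=1,\dots,m$ produces the stated inequality. No obstacle here.

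For \textbf{(C.2)}, since $s/2\ge 1$ the map $t\mapsto t^{s/2}$ is convex on $[0,\infty)$, so I would apply Jensen's inequality (equivalently the power--mean inequality) to the values $a_{k}=|u(k)|^{2}$:
\begin{equation*}
\Bigl(\tfrac{1}{m}\sum_{k=1}^{m}a_{k}\Bigr)^{s/2}\le \tfrac{1}{m}\sum_{k=1}^{m}a_{k}^{s/2}.
\end{equation*}
Substituting $a_{k}=|u(k)|^{2}$ gives $m^{-s/2}\|u\|_{e}^{s}\le m^{-1}\sum_{k=1}^{m}|u(k)|^{s}$, and rearranging yields $\sum_{k=1}^{m}|u(k)|^{s}\ge m^{(2-s)/2}\|u\|_{e}^{s}$.

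For \textbf{(C.3)}, I would first use the triangle inequality together with (C.1) applied with $s=1$--type reasoning: $|\Delta u(k-1)|=|u(k)-u(k-1)|\le |u(k)|+|u(k-1)|\le 2\|u\|_{e}$. Then, since $1\le p(k-1)\le p^{+}$, I would invoke the elementary estimate
\begin{equation*}
a^{p}\le a^{p^{+}}+1\qquad\text{for all }a\ge 0,\ 1\le p\le p^{+},
\end{equation*}
which follows by splitting on whether $a\ge 1$ (so $a^{p}\le a^{p^{+}}$) or $a<1$ (so $a^{p}\le 1$). Applying this with $a=|\Delta u(k-1)|\le 2\|u\|_{e}$ gives $|\Delta u(k-1)|^{p(k-1)}\le (2\|u\|_{e})^{p^{+}}+1=2^{p^{+}}\|u\|_{e}^{p^{+}}+1$, and summing over $k=1,\dots,m$ delivers the claim. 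The only subtle point here is the variable exponent $p(k-1)$; the trick $a^{p}\le a^{p^{+}}+1$ is the essential device that removes the dependence on $k$, and I expect this to be the one step a reader might pause over, though it is purely elementary.
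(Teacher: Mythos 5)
Your proof is correct and follows essentially the same route as the paper's: \textbf{(C.1)} via the pointwise bound $|u(k)|\le \Vert u\Vert _{e}$, \textbf{(C.2)} via the power-mean/H\"{o}lder inequality, and \textbf{(C.3)} by removing the variable exponent through the device $a^{p}\le a^{p^{+}}+1$ (the paper's split of the sum into indices with $|\Delta u(k-1)|\le 1$ and $|\Delta u(k-1)|>1$ is the same trick) followed by the bound $|\Delta u(k-1)|\le 2\Vert u\Vert _{e}$. The only cosmetic difference is that you bound each term of the sum in (C.3) pointwise while the paper first passes to $\sum_{k}|\Delta u(k-1)|^{p^{+}}\le 2^{p^{+}}\sum_{k}|u(k)|^{p^{+}}$ and then invokes (C.1); both yield the identical constant $m(2^{p^{+}}\Vert u\Vert _{e}^{p^{+}}+1)$.
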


\begin{proof}
To see \textbf{(C.1)} note that for all $k\in \lbrack 1,m]$ we have $\ \ \ \
\ \ $%
\begin{equation*}
\qquad \ |u(k)|^{2}\leq \dsum\limits_{i=1}^{m}|\Delta u(i)|^{2}.
\end{equation*}%
Thus$\ $%
\begin{equation*}
\qquad \ |u(k)|^{s}\leq \left( \left( \dsum\limits_{i=1}^{m}|\Delta
u(i)|^{2}\right) ^{\frac{1}{2}}\right) ^{s},
\end{equation*}%
which leads to%
\begin{equation*}
\qquad \dsum\limits_{k=1}^{m}\ |u(k)|^{s}\leq m\left\Vert u\right\Vert
_{e}^{s}.
\end{equation*}

Relation \textbf{(C.2)} follows immediately by H\"{o}lder's inequality.$%
\bigskip $

By \textbf{(C.1)} show we deduce that 
\begin{equation*}
\begin{array}{l}
\dsum\limits_{k=1}^{m}|\Delta u(k-1)|^{p(k-1)}\leq \bigskip \\ 
\dsum\limits_{\left\{ k\in \left[ 1,m\right] :\Delta u|(k-1)|\leq 1\right\}
}|\Delta u(k-1)|^{p^{-}}+\dsum\limits_{\left\{ k\in \left[ 1,m\right]
:|\Delta u(k-1)|>1\right\} }|\Delta u(k-1)|^{p+}\leq \bigskip \\ 
\dsum\limits_{k=1}^{m}|\Delta u(k-1)|^{p+}+\dsum\limits_{k=1}^{m}1\leq
2^{p^{+}}\dsum\limits_{k=1}^{m}\left\vert u(k)\right\vert ^{p^{+}}+m\leq
m\left( 2^{p+}\left\Vert u\right\Vert _{e}^{p^{+}}+1\right) .\bigskip%
\end{array}%
\end{equation*}%
Thus \textbf{(C.3) }holds.

The proof of Lemma \ref{CCC} is complete.
\end{proof}

\section{Multiple periodic solutions}

\subsection{Result by the linking method\label{sub-linking}}

\qquad In this section we investigate the existence $m-$periodic solutions
by applying Proposition \ref{Liu}.

Assume that $F$ satisfies additionally $\bigskip :$\newline
(\textbf{A.4) }There exist $m-$periodic functions $s,r:%
\mathbb{Z}
\rightarrow \lbrack 2,+\infty ),$ $\alpha _{1},\alpha _{2}:%
\mathbb{Z}
\rightarrow (0,+\infty )$ and a function $\alpha _{3}:%
\mathbb{Z}
\rightarrow 
\mathbb{R}
$\ for which%
\begin{equation*}
F(k,u_{_{1}},u_{_{2}})\geq \alpha _{1}(k)\left\vert u_{1}\right\vert
^{s(k)}+\alpha _{2}(k)\left\vert u_{2}\right\vert ^{r(k)}+\alpha
_{3}(k)\bigskip
\end{equation*}%
for\ all $k\in 
\mathbb{Z}
$ and all $u_{1},u_{2}\in 
\mathbb{R}
^{n}$\ \ such that $\left\vert u_{1}\right\vert ,\left\vert u_{2}\right\vert
\geq M$, where $M\geq 1$ is fixed and sufficiently large.

\textbf{(A.5) }There exists a constant $\eta >0$ such that 
\begin{equation*}
F(k,u_{1},u_{2})\geq 0\text{\ \ \ for\ all }k\in 
\mathbb{Z}
\text{ and all }u_{1},u_{2}\in 
\mathbb{R}
^{n}\text{\ \ for which \ }\left\vert u_{1}\right\vert +\left\vert
u_{2}\right\vert \leq 2\eta ;
\end{equation*}%
and at least one of the below conditions: \bigskip \newline
\textbf{(A.6.1) }$\underset{\left\vert u_{1}\right\vert +\left\vert
u_{2}\right\vert \rightarrow 0}{\lim }\frac{F(k,u_{1},u_{2})}{\left\vert
u_{1}\right\vert ^{s^{+}}+\left\vert u_{2}\right\vert ^{r-}}=0$ uniformly in 
$k\in 
\mathbb{Z}
;$\bigskip \newline
\textbf{(A.6.2) }$\underset{\left\vert u_{1}\right\vert +\left\vert
u_{2}\right\vert \rightarrow 0}{\lim }\frac{F(k,u_{1},u_{2})}{\left\vert
u_{1}\right\vert ^{s^{-}}+\left\vert u_{2}\right\vert ^{r^{+}}}=0$ uniformly
in $k\in 
\mathbb{Z}
;$\bigskip \newline
\textbf{(A.6.3)} $\underset{\left\vert u_{1}\right\vert +\left\vert
u_{2}\right\vert \rightarrow 0}{\lim }\frac{F(k,u_{1},u_{2})}{\left\vert
u_{1}\right\vert ^{s^{-}}+\left\vert u_{2}\right\vert ^{r^{-}}}=0$ uniformly
in $k\in 
\mathbb{Z}
,$

where

\begin{equation*}
s^{-}=\underset{k\in \left[ 1,m\right] }{\min }s\left( k\right) ,\text{ \ \
\ }s^{+}=\underset{k\in \left[ 1,m\right] }{\max }s\left( k\right) ,\text{ \ 
}r^{-}=\underset{k\in \left[ 1,m\right] }{\min }r\left( k\right) ,\text{ \ \
\ }r^{+}=\underset{k\in \left[ 1,m\right] }{\max }r\left( k\right) .\text{\ }
\end{equation*}

Note that both \textbf{(A.6.2)} and \textbf{(A.6.3) }imply \textbf{(A.6.4). }%
Now, we give some examples of nonlinear terms which can be considered by our
approach.

\begin{example}
\bigskip Let $m\geq 2$ be a fixed even natural\ number. Assume that 
\begin{equation*}
f(k,t_{1},t_{2},t_{3})=4t_{2}^{3}\left( 2+(-1)^{k}\left( \cos \left(
\left\vert t_{1}\right\vert ^{4}+\left\vert t_{2}\right\vert ^{4}\right)
-\cos \left( \left\vert t_{2}\right\vert ^{4}+\left\vert t_{3}\right\vert
^{4}\right) \right) \right) .
\end{equation*}%
for $(k,t_{1},t_{2},t_{3})\in 
\mathbb{Z}
\times 
\mathbb{R}
^{n}\times 
\mathbb{R}
^{n}\times 
\mathbb{R}
^{n}.$ Let us take the function $F$: $%
\mathbb{Z}
\times 
\mathbb{R}
^{n}\times 
\mathbb{R}
^{n}\rightarrow 
\mathbb{R}
$ given by 
\begin{equation*}
F(k,t_{_{1}},t_{_{2}})=\left\vert t_{1}\right\vert ^{4}+\left\vert
t_{2}\right\vert ^{4}+(-1)^{k}\sin \left( \left\vert t_{1}\right\vert
^{4}+\left\vert t_{2}\right\vert ^{4}\right)
\end{equation*}%
and functions $s,r:%
\mathbb{Z}
\rightarrow \lbrack 2,+\infty )$ such that%
\begin{equation*}
r(k)=s(k)=\left\{ 
\begin{array}{c}
4\text{ \ \ \ \ \ \ \ \ \ \ \ \ \ \ \ \ \ \ \ \ \ \ \ \ \ \ \ \ \ \ \ \ \
for }k=2l, \\ 
2\ \text{\ \ \ \ \ \ \ \ \ \ \ \ \ \ \ \ \ \ for }k=2l+1;l\in 
\mathbb{Z}
.%
\end{array}%
\right.
\end{equation*}%
With such defined functions we see that conditions \textbf{(A.1)}-\textbf{%
(A.5)} and \textbf{(A.6.3) }are fulfilled with $M=1$, $\eta \in (0,\frac{1}{2%
}],$ $\alpha _{1}(k)=\alpha _{2}(k)=1,$ $\alpha _{3}(k)=-1$ for all $k\in 
\mathbb{Z}
,$ but conditions \textbf{(A.6.1), (A.6.2)} are not.
\end{example}

\begin{example}
Let $m\geq 2$ be a fixed natural\ number. Assume that 
\begin{equation*}
f(k,t_{1},t_{2},t_{3})=4t_{2}^{3}\left( \cos ^{2}\left( \frac{k-1}{m}\pi
\right) t_{3}^{4}+\cos ^{2}\left( \frac{k}{m}\pi \right) t_{1}^{4}\right) .
\end{equation*}%
for $(k,t_{1},t_{2},t_{3})\in 
\mathbb{Z}
\times 
\mathbb{R}
^{n}\times 
\mathbb{R}
^{n}\times 
\mathbb{R}
^{n}.$ Let us take the function $F$: $%
\mathbb{Z}
\times 
\mathbb{R}
^{n}\times 
\mathbb{R}
^{n}\rightarrow 
\mathbb{R}
$ given by 
\begin{equation*}
F(k,t_{_{1}},t_{_{2}})=\cos ^{2}\left( \frac{k}{m}\pi \right) \left\vert
t_{1}t_{2}\right\vert ^{4}
\end{equation*}%
and functions $s,r:%
\mathbb{Z}
\rightarrow \lbrack 2,+\infty )$ such that%
\begin{equation*}
r(k)=s(k)=\sin (\frac{k}{m}\pi )+3.
\end{equation*}%
With such defined functions conditions \textbf{(A.1)}-\textbf{(A.5)} and 
\textbf{(A.6.1)}-\textbf{(A.6.3)} are fulfilled with $M=\sqrt[4]{2},$ $%
\alpha _{1}(k)=\alpha _{2}(k)=\cos ^{2}\left( \frac{k}{m}\pi \right) ,$ $%
\alpha _{3}(k)=0$ for all $k\in 
\mathbb{Z}
.$
\end{example}

Set%
\begin{equation*}
\alpha _{i}^{-}=\underset{k\in \left[ 1,m\right] }{\min }\alpha _{i}\left(
k\right) \text{\ \ \ \ for\ \ \ }i=1,2,3\bigskip .
\end{equation*}

\begin{lemma}
\label{anti-coercive}Suppose that conditions \textbf{(A.1)-(A.2) }are
satisfied. Assume that \textbf{(A.4)} holds with either $s^{-}>p^{+}$ or $\
r^{-}>p^{+}$. Then the functional $J_{m}$ is anti-coercive on $H_{m}$ for
all $\lambda >0$.
\end{lemma}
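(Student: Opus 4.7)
The plan is to estimate the two parts of $J_m$ separately and show that under (A.4), the nonlinear ``potential'' part overwhelms the polynomially-growing ``kinetic'' part. Concretely, by Lemma \ref{CCC}(C.3),
$$\sum_{k=1}^m \frac{1}{p(k-1)}|\Delta u(k-1)|^{p(k-1)} \;\le\; \frac{m}{p^-}\bigl(2^{p^+}\|u\|_e^{p^+}+1\bigr),$$
so the first summand of $J_m(u)$ is majorised by a quantity of order $\|u\|_e^{p^+}$. All the work, therefore, is in producing a lower estimate of order $\|u\|_e^{\max(s^-,r^-)}$ on $\sum_{k=1}^m F(k,u(k+1),u(k))$, since by assumption this exponent is strictly larger than $p^+$.

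Assume without loss of generality that $r^->p^+$ (the case $s^->p^+$ is symmetric, using $\alpha_1$ instead of $\alpha_2$). I would split the index set $[1,m]$ into the ``good'' set $A=\{k:|u(k+1)|\ge M,\ |u(k)|\ge M\}$ and its complement $B$. On $A$, condition (A.4) yields
$$F(k,u(k+1),u(k)) \;\ge\; \alpha_2^-|u(k)|^{r(k)} + \alpha_3^-,$$
after discarding the nonnegative $\alpha_1(k)|u(k+1)|^{s(k)}$ contribution and using $\alpha_3(k)\ge \alpha_3^-$. Since $r(k)\ge r^-\ge 2$ and $|u(k)|\ge M\ge 1$ on $A$, the values $|u(k)|^{r(k)}$ dominate $|u(k)|^{r^-}$, and I would then apply Lemma \ref{CCC}(C.2) with exponent $r^-$ to pass from the pointwise sum $\sum_{k}|u(k)|^{r^-}$ to the norm estimate $m^{(2-r^-)/2}\|u\|_e^{r^-}$, after first absorbing the indices outside $A$ (at most $m$ of them, each with $|u(k)|\le M$) into the additive constant.

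The delicate step is controlling the contribution of $\sum_{k\in B}F(k,u(k+1),u(k))$ from below. On $B$ the inequality (A.4) does not apply because at least one coordinate is below $M$. I would handle this by exploiting (i) finiteness of $B$ (at most $m$ indices), (ii) continuity of $F$ together with (A.2)--(A.3) to bound $F$ uniformly from below on the compact slices where the ``small'' coordinate lives, and (iii) discarding once more the nonnegative $\alpha_1$- or $\alpha_2$-terms associated with a ``large'' coordinate when necessary. This produces a uniform lower bound $\sum_{k\in B}F\ge -C_0$ for some constant $C_0$ independent of $u$. Combining everything,
$$J_m(u)\;\le\;\frac{m}{p^-}\bigl(2^{p^+}\|u\|_e^{p^+}+1\bigr) \;-\;\lambda\,\alpha_2^-\, m^{(2-r^-)/2}\|u\|_e^{r^-} \;+\;C,$$
valid for $\|u\|_e$ large. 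Since $r^->p^+$ and $\lambda>0$, the right-hand side tends to $-\infty$, which gives anti-coercivity. The main obstacle is isolating the ``bad'' contribution; once that handful of indices is absorbed into an additive constant, the rest is a clean power-counting argument driven by $r^->p^+$.
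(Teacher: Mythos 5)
Your core estimate is exactly the paper's: majorise the difference term by $\frac{m}{p^{-}}\bigl(2^{p^{+}}\Vert u\Vert _{e}^{p^{+}}+1\bigr)$ via \textbf{(C.3)}, minorise $\sum_{k=1}^{m}F(k,u(k+1),u(k))$ by a multiple of $\Vert u\Vert _{e}^{s^{-}}$ or $\Vert u\Vert _{e}^{r^{-}}$ via \textbf{(A.4)}, the bounds $\alpha _{i}(k)\geq \alpha _{i}^{-}$ and \textbf{(C.2)}, and conclude by comparing exponents. The paper simply applies \textbf{(A.4)} to every index $k\in \lbrack 1,m]$ and writes $|u(k)|^{r(k)}\geq |u(k)|^{r^{-}}$ without comment, i.e.\ it silently ignores the restriction $\left\vert u_{1}\right\vert ,\left\vert u_{2}\right\vert \geq M$ in \textbf{(A.4)}; up to that, your power-counting matches it (keeping only the $\alpha _{2}$-term when $r^{-}>p^{+}$ is a harmless simplification).

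You rightly notice that the restriction to $\left\vert u_{1}\right\vert ,\left\vert u_{2}\right\vert \geq M$ matters and try to patch it by splitting $[1,m]$ into the good set $A$ and the bad set $B$. The treatment of $A$ is fine (the at most $m$ indices missing from the sum $\sum_{k}|u(k)|^{r^{-}}$ each contribute at most $M^{r^{-}}$ and go into a constant). But step (ii) of your treatment of $B$ has a genuine gap: for $k\in B$ only one of $|u(k+1)|$, $|u(k)|$ is confined to the ball of radius $M$, while the other may be arbitrarily large, so the ``slice'' on which you invoke continuity is unbounded and no uniform lower bound $\sum_{k\in B}F\geq -C_{0}$ follows from continuity together with \textbf{(A.2)}--\textbf{(A.3)}. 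Nothing in the stated hypotheses prevents $F(k,u_{1},u_{2})$ from tending to $-\infty$ faster than any power as $|u_{1}|\rightarrow \infty $ with $|u_{2}|$ small and fixed, in which case $-\lambda \sum_{k\in B}F$ could dominate and defeat anti-coercivity. Closing this would require an additional assumption (e.g.\ that the lower bound of \textbf{(A.4)} holds for all $u_{1},u_{2}$ after decreasing $\alpha _{3}$, or that $F$ is bounded below on the strips where one argument has modulus at most $M$). Note that the paper's own proof does not resolve this either; it avoids the issue only by not addressing it.
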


\begin{proof}
Using \textbf{(A.4)}, \textbf{(C.2)} \ and\textbf{\ (C.3)} we obtain%
\begin{equation*}
\begin{array}{l}
J_{m}\left( u\right) \leq \frac{1}{p^{-}}\bigskip m\left( 2^{p+}\left\Vert
u\right\Vert _{e}^{p^{+}}+1\right) - \\ 
\lambda \dsum\limits_{k=1}^{m}\left( \alpha _{1}(k)\left\vert
u(k+1)\right\vert ^{s(k)}+\alpha _{2}(k)\left\vert u(k)\right\vert
^{r(k)}+\alpha _{3}(k)\right) \leq \text{ \ \ \ \ \ \ \ \ \ \ \ \ \ \ \ \ \
\ \ \ }%
\end{array}%
\end{equation*}%
\begin{equation*}
\begin{array}{l}
\frac{1}{p^{-}}m\left( 2^{p+}\left\Vert u\right\Vert _{e}^{p^{+}}+1\right)
-\lambda \alpha _{1}^{-}\dsum\limits_{k=1}^{m}\left\vert u(k)\right\vert
^{s^{-}}-\lambda \alpha _{2}^{-}\dsum\limits_{k=1}^{m}\left\vert
u(k)\right\vert ^{r^{-}}-\lambda m\alpha _{3}^{-}\leq \bigskip \\ 
\frac{1}{p^{-}}\bigskip 2^{p+}m\left\Vert u\right\Vert _{e}^{p^{+}}-\lambda
\alpha _{1}^{-}m^{\frac{2-s^{-}}{2}}\left\Vert u\right\Vert
_{e}^{s^{-}}-\lambda \alpha _{2}^{-}m^{\frac{2-r^{-}}{2}}\left\Vert
u\right\Vert _{e}^{r^{-}}+\left( \frac{1}{p^{-}}-\lambda \alpha
_{3}^{-}\right) m.%
\end{array}%
\end{equation*}%
Since $s^{-}>p^{+}$ or $r^{-}>p^{+}$, so $J_{m}$ is anti-coercive on $%
H_{m}.\bigskip $
\end{proof}

Put

\begin{equation*}
\lambda _{1}=\frac{2^{p^{+}}m^{\frac{p^{+}}{2}}}{p^{-}\alpha _{1}^{-}};\text{
\ \ }\lambda _{2}=\frac{2^{p^{+}}m^{\frac{p^{+}}{2}}}{p^{-}\alpha _{2}^{-}};%
\text{ \ \ }\lambda _{3}=\frac{2^{p^{+}}m^{\frac{p^{+}}{2}}}{p^{-}\left(
\alpha _{1}^{-}+\alpha _{2}^{-}\right) }.
\end{equation*}

\begin{lemma}
Suppose that conditions \textbf{(A.1)-(A.2) }and\textbf{\ (A.4) }are
satisfied. The following assertions are true:\newline
(a) If $s^{-}=p^{+}$ and $r^{-}<p^{+}$, then the functional $J_{m}$ is
anti-coercive on $H_{m}$ for any $\lambda \in \left( \lambda _{1},+\infty
\right) $;\newline
(b) If $r^{-}=p^{+}$ and $s^{-}<p^{+}$, then the functional $J_{m}$ is
anti-coercive on $H_{m}$ for any $\lambda \in \left( \lambda _{2},+\infty
\right) $; \newline
(c) If $r^{-}=s^{-}=p^{+}$, then the functional $J_{m}$ is anti-coercive on $%
H_{m}$ for any $\lambda \in \left( \lambda _{3},+\infty \right) $.
\end{lemma}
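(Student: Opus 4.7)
The plan is to re-use the estimate derived in the proof of Lemma \ref{anti-coercive}. Recall that by combining \textbf{(A.4)}, \textbf{(C.2)}, and \textbf{(C.3)}, we obtained
\begin{equation*}
J_{m}(u) \leq \tfrac{1}{p^{-}}2^{p^{+}}m\|u\|_{e}^{p^{+}} - \lambda\alpha_{1}^{-}m^{\frac{2-s^{-}}{2}}\|u\|_{e}^{s^{-}} - \lambda\alpha_{2}^{-}m^{\frac{2-r^{-}}{2}}\|u\|_{e}^{r^{-}} + \bigl(\tfrac{1}{p^{-}} - \lambda\alpha_{3}^{-}\bigr)m.
\end{equation*}
In Lemma \ref{anti-coercive} anti-coercivity came for free because one of the negative terms had an exponent strictly larger than $p^{+}$. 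In the present situation the critical exponents only equal $p^{+}$, so the leading powers in $\|u\|_e$ actually coincide, and the sign of the resulting coefficient is what must be checked; this is the essence of the proof.

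For case (a), $s^{-}=p^{+}>r^{-}$, the first and the second terms collapse into
\begin{equation*}
\Bigl(\tfrac{1}{p^{-}}2^{p^{+}}m - \lambda\alpha_{1}^{-}m^{\frac{2-p^{+}}{2}}\Bigr)\|u\|_{e}^{p^{+}},
\end{equation*}
while the $\|u\|_{e}^{r^{-}}$ term has lower order and is negligible at infinity. Thus $J_{m}$ is anti-coercive provided the bracket is negative, which, after dividing by $\alpha_{1}^{-}m^{(2-p^{+})/2}$, is exactly $\lambda>\lambda_{1}$. Case (b) is completely symmetric: combine the first and third terms to get the coefficient $\frac{2^{p^{+}}m}{p^{-}} - \lambda\alpha_{2}^{-}m^{(2-p^{+})/2}$ in front of $\|u\|_{e}^{p^{+}}$, which is negative exactly when $\lambda>\lambda_{2}$.

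For case (c), $s^{-}=r^{-}=p^{+}$, all three dominant terms combine into
\begin{equation*}
\Bigl(\tfrac{1}{p^{-}}2^{p^{+}}m - \lambda(\alpha_{1}^{-}+\alpha_{2}^{-})m^{\frac{2-p^{+}}{2}}\Bigr)\|u\|_{e}^{p^{+}} + \bigl(\tfrac{1}{p^{-}} - \lambda\alpha_{3}^{-}\bigr)m,
\end{equation*}
and the coefficient of $\|u\|_{e}^{p^{+}}$ is negative precisely when $\lambda>\lambda_{3}$, giving anti-coercivity as $\|u\|_{e}\to\infty$. The only mildly subtle point is to make sure that the explicit threshold matches the stated $\lambda_{i}$; this is a short algebraic check since $m\cdot m^{(2-p^{+})/2}=m^{p^{+}/2}$, so no serious obstacle arises once the estimate of Lemma \ref{anti-coercive} is in hand.
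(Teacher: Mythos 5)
Your proof is correct and follows essentially the same route as the paper: both reuse the estimate from Lemma \ref{anti-coercive}, merge the terms of equal order $p^{+}$, and read off the sign of the resulting leading coefficient, which yields exactly the thresholds $\lambda_{1},\lambda_{2},\lambda_{3}$. The only slip is in your closing algebraic check, which should read $m/m^{(2-p^{+})/2}=m^{p^{+}/2}$ rather than a product (as written the identity fails unless $p^{+}=2$), but the thresholds you actually derive are the correct ones.
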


\begin{proof}
Assume that $s^{-}=p^{+}$ and $r^{-}<p^{+}.$ Let $\lambda \in \left( \lambda
_{1},+\infty \right) .$ Arguing as in the proof of Lemma \ref{anti-coercive}
we obtain%
\begin{equation*}
J_{m}\left( u\right) \leq \left( \frac{1}{p^{-}}2^{p^{+}}m-\lambda \alpha
_{1}^{-}m^{\frac{2-p^{+}}{2}}\right) \Vert u\Vert _{e}^{p^{+}}-\lambda
\alpha _{2}^{-}m^{\frac{2-r^{-}}{2}}\Vert u\Vert _{e}^{r^{-}}+\left( \frac{1%
}{p^{-}}-\lambda \alpha _{3}^{-}\right) m.
\end{equation*}%
Hence assertion (a) holds. In the remaining cases we proceed as in the above.%
$\bigskip $
\end{proof}

\begin{remark}
\label{ksip}It easy to verify that 
\begin{equation*}
\left\Vert u\right\Vert _{p^{+}}=\left( \dsum\limits_{k=1}^{m}\left\vert
\Delta u(k-1)\right\vert ^{p^{+}}\right) ^{\frac{1}{p^{+}}}
\end{equation*}%
is also a norm on $Y$, while it is obviously not a norm on $H_{m}$. Since
all norms on $Y$ are equivalent, therefore there exists a constant\textit{\ }%
$\xi >0$\textit{\ }such that\textit{\ } 
\begin{equation}
\dsum\limits_{k=1}^{m}|\Delta u(k-1)|^{p^{+}}\geq \xi \left\Vert
u\right\Vert _{e}^{p^{+}}.  \label{ksi}
\end{equation}
\end{remark}

Now, We are able to formulate our main result in two cases, depending on
relation between functions $r$, $s$ and $p$.$\bigskip $

\textit{Case I. }$s^{-}>p^{+}.\bigskip $

\begin{theorem}
Let $s^{-}>p^{+}$ and $\lambda >0$ be fixed. Assume that conditions \textbf{%
(A.1)-(A.5)} are satisfied\textbf{\ }and that at least one of the following
conditions\ holds:\newline
(a) \textbf{(A.6.2)} with $s^{-}\leq r^{+};$ \bigskip \newline
(b) \textbf{(A.6.3)} with $s^{-}\leq r^{-}.$\bigskip \newline
Then problem (\ref{zad}) has at least three $m$-periodic solutions, two of
which are nontrivial.
\end{theorem}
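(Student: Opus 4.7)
The plan is to apply Proposition \ref{Liu} to the functional $-J_m$ on the finite dimensional Hilbert space $H_m$, using the orthogonal decomposition $H_m=Y\oplus W$ introduced in Section~3, where $W$ is the subspace of constant sequences. Since $s^{-}>p^{+}$, Lemma \ref{anti-coercive} gives that $J_m$ is anti-coercive, so $-J_m$ is coercive; finite dimensionality of $H_m$ then yields, at once, that $-J_m$ is bounded from below and satisfies the Palais--Smale condition. The continuous Gateaux differentiability established in Lemma \ref{Gateaux} amounts to $C^{1}$-smoothness in this finite dimensional setting.

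The essential step is to verify the local linking of $-J_m$ at the origin, where $-J_m(0)=0$ by (A.3). On $W$, each $u\equiv c$ has $\Delta u\equiv 0$, so $J_m(u)=-\lambda\sum_{k=1}^{m}F(k,c,c)$; condition (A.5) then forces $J_m(u)\leq 0=J_m(0)$ whenever $|c|\leq\eta$, that is $-J_m(u)\geq -J_m(0)$ in a neighbourhood of $0$ in $W$. On $Y$, choose $\|u\|_{e}$ so small that $|\Delta u(k-1)|\leq 1$ for every $k$; then $|\Delta u(k-1)|^{p(k-1)}\geq |\Delta u(k-1)|^{p^{+}}$ and Remark \ref{ksip} yields
\[
\sum_{k=1}^{m}\frac{1}{p(k-1)}|\Delta u(k-1)|^{p(k-1)}\geq \frac{\xi}{p^{+}}\|u\|_{e}^{p^{+}}.
\]

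For the nonlinear part, (A.6.2) in case (a), respectively (A.6.3) in case (b), allows us, for any $\varepsilon>0$ and for $u$ close enough to $0$, to bound $F(k,u(k+1),u(k))\leq \varepsilon(|u(k+1)|^{s^{-}}+|u(k)|^{\tau})$, where $\tau=r^{+}$ in case (a) and $\tau=r^{-}$ in case (b). Combining with (C.1) gives $\lambda\sum F(k,u(k+1),u(k))\leq \lambda\varepsilon m(\|u\|_{e}^{s^{-}}+\|u\|_{e}^{\tau})$. The supplementary hypothesis $s^{-}\leq r^{+}$, respectively $s^{-}\leq r^{-}$, together with $s^{-}>p^{+}$, ensures that both $s^{-}$ and $\tau$ strictly exceed $p^{+}$, so
\[
J_m(u)\geq \frac{\xi}{p^{+}}\|u\|_{e}^{p^{+}}-\lambda\varepsilon m\bigl(\|u\|_{e}^{s^{-}}+\|u\|_{e}^{\tau}\bigr)>0
\]
for $u\in Y$ with $0<\|u\|_{e}\leq\rho$ and $\rho$ sufficiently small; equivalently, $-J_m(u)<-J_m(0)$.

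All hypotheses of Proposition \ref{Liu} being verified for $-J_m$, we obtain at least three critical points of $-J_m$ in $H_m$, which are critical points of $J_m$ and, by Lemma \ref{Gateaux}, correspond to three $m$-periodic solutions of (\ref{zad}). Conditions (A.3) and (A.5) force $F$ to have a local minimum at $(0,0)$, hence $F_{2}'(k,0,0)=F_{3}'(k,0,0)=0$ and $f(k,0,0,0)=0$, so the zero sequence is one of the critical points, and the remaining two are nontrivial. The main delicate point I expect is the bookkeeping of the variable exponents in the local linking estimate on $Y$: the conditions $s^{-}>p^{+}$ and either $s^{-}\leq r^{+}$ or $s^{-}\leq r^{-}$ are calibrated precisely to make both terms in the upper bound for the nonlinearity of strictly higher order than the $\|u\|_{e}^{p^{+}}$ lower bound, and one must carefully exploit the smallness threshold $|\Delta u(k-1)|\leq 1$ to compare $p(k-1)$-powers with $p^{+}$-powers in the correct direction.
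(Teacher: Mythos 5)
Your proposal is correct and follows essentially the same route as the paper: applying Proposition \ref{Liu} to $-J_{m}$ with the decomposition $H_{m}=Y\oplus W$, obtaining the Palais--Smale condition and boundedness from below from the anti-coercivity in Lemma \ref{anti-coercive}, and establishing the local linking via \textbf{(A.5)} on $W$ and via Remark \ref{ksip} combined with \textbf{(A.6.2)}/\textbf{(A.6.3)} and \textbf{(C.1)} on $Y$. If anything, you are slightly more careful than the paper in two places: you make explicit the smallness threshold $|\Delta u(k-1)|\leq 1$ needed to compare $|\Delta u(k-1)|^{p(k-1)}$ with $|\Delta u(k-1)|^{p^{+}}$, and you justify that $u\equiv 0$ is itself a solution (so that the other two critical points are nontrivial), a point the paper leaves implicit.
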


\begin{proof}
Assume that \textbf{(A.1)-(A.5) }with\textbf{\ }(a) hold. Choose a positive
real number $\varepsilon $ satisfying 
\begin{equation*}
\varepsilon <\frac{\xi }{2\lambda mp^{+}},
\end{equation*}%
where $\xi >0$ is a constant defined in Remark \ref{ksip}$.$ By \textbf{(A.5)%
}, \textbf{(A.6.2)} there exists $\rho \in (0,\rho _{0}),$ where $\rho
_{0}=\min \left\{ 1,\eta \right\} $, such that 
\begin{equation}
F(k,u_{1},u_{2})\leq \varepsilon (\left\vert u_{1}\right\vert
^{s^{-}}+\left\vert u_{2}\right\vert ^{r^{+}})\text{ \ \ \ for \ \ }%
\left\vert u_{1}\right\vert +\left\vert u_{2}\right\vert \leq 2\rho \text{.}
\label{eps}
\end{equation}%
If $u\in Y$ with $0<\left\Vert u\right\Vert _{e}\leq \rho $ then $\left\vert
u(k)\right\vert \leq \rho $ for all $k\in 
\mathbb{Z}
.$ By \textbf{(}\ref{ksi}\textbf{)}, (\ref{eps}) and \textbf{(C.1)} we obtain%
\begin{equation*}
\begin{array}{l}
J_{m}\left( u\right) \geq \frac{1}{p^{+}}\bigskip \xi \left\Vert
u\right\Vert _{e}^{p^{+}}-\lambda \varepsilon
\dsum\limits_{k=1}^{m}(\left\vert u(k+1)\right\vert ^{s^{-}}+\left\vert
u(k)\right\vert ^{r^{+}})\geq \\ 
\frac{1}{p^{+}}\xi \left\Vert u\right\Vert _{e}^{p^{+}}-\lambda \varepsilon
\dsum\limits_{k=1}^{m}(\left\vert u(k)\right\vert ^{s^{-}}+\left\vert
u(k)\right\vert ^{r^{+}})\geq \bigskip \\ 
\frac{1}{p^{+}}\xi \left\Vert u\right\Vert _{e}^{p^{+}}-\lambda \varepsilon
m(\left\Vert u\right\Vert _{e}^{s^{-}}+\left\Vert u\right\Vert
_{e}^{r^{+}})\geq \Vert u\Vert _{e}^{s-}\left( \frac{1}{p^{+}}\bigskip \xi
-2\lambda \varepsilon m\right) >0.%
\end{array}%
\end{equation*}%
Thus, by above and \textbf{(A.3)} we obtain%
\begin{equation*}
J_{m}(u)>J_{m}(0)\text{ for all }u\in Y\text{ }\ \text{with }0<\left\Vert
u\right\Vert _{e}\leq \rho .
\end{equation*}%
Notice that for every $u\in W$ we have $\Delta u(k-1)=0$ for all $k\in 
\mathbb{Z}
,$ so%
\begin{equation*}
J_{m}\left( u\right) =-\lambda \dsum\limits_{k=1}^{m}F(k,u(k+1),u(k)).
\end{equation*}%
If $u\in W$ with $\left\Vert u\right\Vert _{e}\leq \rho $ then 
\begin{equation*}
\text{\ }\left\vert u(k+1)\right\vert +\left\vert u(k)\right\vert \leq 2\eta
\end{equation*}%
for all $k\in 
\mathbb{Z}
.$ Thus, by \textbf{(A.5)} and \textbf{(A.3)} it follows that 
\begin{equation*}
J_{m}(u)\leq J_{m}(0)\text{ for all }u\in W\text{ }\ \text{with }\left\Vert
u\right\Vert _{e}\leq \rho .
\end{equation*}%
Let $\Phi _{m}=-J_{m}.$ We see that $\Phi _{m}$ has a local linking at the
origin $0$ with respect to the decomposition $H_{m}=Y\oplus W.$

By Lemma \ref{anti-coercive} we deduce that $\Phi _{m}$ satisfies the
Palais-Smale condition. Moreover, $\Phi _{m}$ is bounded from below, since
as coercive and continuous has a minimizer. We have shown that the
assumptions of Lemma \ref{Liu} are satisfied, so $\Phi _{m}$ has at least
three critical points, two of them are nonzero critical points. By Lemma \ref%
{Gateaux}\ these are nontrivial $m-$periodic solutions of problem (\ref{zad}%
). In case (b) we follow analogously.$\bigskip $
\end{proof}

For $s^{-}=p^{+}$ with $r^{-}<p^{+}$ we can observe that

\begin{corollary}
Let $s^{-}=p^{+}$ with $r^{-}<p^{+}$ and let $\lambda \in (\lambda
_{1},+\infty ).$ Assume that conditions \textbf{(A.1)-(A.5)} and condition 
\textbf{(A.6.2) }with $s^{-}\leq r^{+}$ are satisfied. Then problem (\ref%
{zad}) has at least three $m-$periodic solutions, two of which are
nontrivial.$\bigskip $
\end{corollary}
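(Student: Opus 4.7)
The plan is to mirror the proof of the preceding theorem, adjusting only where the borderline exponent $s^{-}=p^{+}$ forces us to balance leading terms at the same order. As before, set $\Phi_{m}=-J_{m}$ and aim to apply Proposition \ref{Liu} with the orthogonal decomposition $H_{m}=Y\oplus W$.

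First I would invoke part (a) of the lemma preceding Remark \ref{ksip}: since $s^{-}=p^{+}$, $r^{-}<p^{+}$ and $\lambda \in (\lambda_{1},+\infty)$, the functional $J_{m}$ is anti-coercive, so $\Phi_{m}$ is coercive on the finite-dimensional space $H_{m}$, hence bounded from below and automatically satisfies the Palais--Smale condition. This handles two of the three hypotheses of Proposition \ref{Liu} in one step.

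The main work is in verifying the local linking, and this is where the critical-exponent balance appears. Following the theorem's proof, I would pick $\varepsilon < \xi/(2\lambda m p^{+})$ with $\xi$ as in Remark \ref{ksip}, and use \textbf{(A.5)}, \textbf{(A.6.2)} to obtain $\rho\in(0,\min\{1,\eta\}]$ so that
\begin{equation*}
F(k,u_{1},u_{2})\leq \varepsilon(\left\vert u_{1}\right\vert ^{s^{-}}+\left\vert u_{2}\right\vert ^{r^{+}})\quad \text{for }\left\vert u_{1}\right\vert +\left\vert u_{2}\right\vert \leq 2\rho.
\end{equation*}
For $u\in Y$ with $0<\Vert u\Vert_{e}\leq \rho$, using (\ref{ksi}) and \textbf{(C.1)},
\begin{equation*}
J_{m}(u)\geq \frac{\xi}{p^{+}}\Vert u\Vert_{e}^{p^{+}}-\lambda \varepsilon m\bigl(\Vert u\Vert_{e}^{s^{-}}+\Vert u\Vert_{e}^{r^{+}}\bigr).
\end{equation*}
Since $s^{-}=p^{+}\leq r^{+}$ and $\rho\leq 1$, the $r^{+}$-term is dominated by the $s^{-}$-term, so the right-hand side is bounded below by $\Vert u\Vert_{e}^{p^{+}}\bigl(\xi/p^{+}-2\lambda \varepsilon m\bigr)>0$ by our choice of $\varepsilon$. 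Together with \textbf{(A.3)} this yields $J_{m}(u)>J_{m}(0)$, i.e.\ $\Phi_{m}(u)<\Phi_{m}(0)$ on $Y\setminus\{0\}$ in the ball of radius $\rho$. On $W$, $\Delta u(k-1)=0$ for all $k$, so $J_{m}(u)=-\lambda\sum_{k=1}^{m}F(k,u(k+1),u(k))$; for $\Vert u\Vert_{e}\leq \rho \leq \eta$ we get $|u(k+1)|+|u(k)|\leq 2\eta$, whence \textbf{(A.5)} and \textbf{(A.3)} give $J_{m}(u)\leq 0=J_{m}(0)$, that is $\Phi_{m}(u)\geq \Phi_{m}(0)$.

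With all hypotheses of Proposition \ref{Liu} verified, $\Phi_{m}$ admits at least three critical points, two of which are nonzero. By Lemma \ref{Gateaux}, they correspond to three $m$-periodic solutions of (\ref{zad}), two nontrivial. The only delicate step is the local linking estimate on $Y$, where the critical equality $s^{-}=p^{+}$ means the suppression of $F$ near $0$ competes at the same order as the leading term; this is resolved by fixing $\varepsilon$ small enough \emph{before} choosing $\rho$, exactly as in the preceding theorem.
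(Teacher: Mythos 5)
Your proof is correct and is exactly the argument the paper intends (the corollary is stated without proof as an observation following the theorem): you substitute part (a) of the borderline anti-coercivity lemma, valid for $\lambda>\lambda_{1}$, for Lemma \ref{anti-coercive}, and note that the local-linking estimate on $Y$ still closes because $\Vert u\Vert_{e}^{s^{-}}=\Vert u\Vert_{e}^{p^{+}}$ and $\Vert u\Vert_{e}^{r^{+}}\leq\Vert u\Vert_{e}^{s^{-}}$ for $\Vert u\Vert_{e}\leq\rho\leq 1$. No gaps.
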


\textit{Case II. }$r^{-}>p^{+}.\bigskip $

\begin{theorem}
Let $r^{-}>p^{+}$ and $\lambda >0$ be fixed. Assume that conditions \textbf{%
(A.1)-(A.5)} are satisfied and at least one of the following conditions
holds\bigskip :\newline
(a) \textbf{(A.6.1)} with $r^{-}\leq s^{+}$; \bigskip \newline
(b) \bigskip \textbf{(A.6.3)} with $r^{-}\leq s^{-}.$\newline
Then problem (\ref{zad}) has at least three $m$-periodic solutions, two of
which are nontrivial.\bigskip
\end{theorem}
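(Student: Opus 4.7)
The plan is to mirror the proof of the preceding theorem (Case I) with the two coordinates interchanged, so that the exponent $r$ now plays the part previously played by $s$. As before, I would set $\Phi_m := -J_m$ and aim at applying Proposition \ref{Liu} to $\Phi_m$.

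First, since $r^->p^+$, Lemma \ref{anti-coercive} gives that $J_m$ is anti-coercive on $H_m$; consequently $\Phi_m$ is coercive on the finite-dimensional Hilbert space $H_m$, which yields both boundedness from below and the Palais-Smale condition in a standard way.

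Next, I would establish a local linking of $\Phi_m$ at $0$ relative to the orthogonal decomposition $H_m = Y \oplus W$. On $W$ the argument is verbatim that of Case I: for constant sequences $\Delta u(k-1)=0$, so $J_m(u) = -\lambda \sum_{k=1}^{m} F(k,u(k+1),u(k))$, and \textbf{(A.5)} together with \textbf{(A.3)} yields $\Phi_m(u)\geq \Phi_m(0)$ for $\|u\|_e \leq \rho$ whenever $\rho \leq \eta$. On $Y$ I would fix $\varepsilon < \xi/(2\lambda m p^+)$ with $\xi$ from Remark \ref{ksip}, and use whichever of \textbf{(A.6.1)} or \textbf{(A.6.3)} applies to find $\rho\in(0,\min\{1,\eta\})$ with
\begin{equation*}
F(k,u_1,u_2) \leq \varepsilon\bigl(|u_1|^{\sigma_1}+|u_2|^{\sigma_2}\bigr) \quad\text{whenever } |u_1|+|u_2|\leq 2\rho,
\end{equation*}
where $(\sigma_1,\sigma_2)=(s^+,r^-)$ in case (a) and $(\sigma_1,\sigma_2)=(s^-,r^-)$ in case (b). Combining this with \textbf{(\ref{ksi})} and \textbf{(C.1)} gives, on $Y$,
\begin{equation*}
J_m(u) \geq \frac{\xi}{p^+}\|u\|_e^{p^+} - \lambda\varepsilon m\bigl(\|u\|_e^{\sigma_1}+\|u\|_e^{\sigma_2}\bigr).
\end{equation*}

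The only nontrivial point is to check that $\min(\sigma_1,\sigma_2)>p^+$, which then makes the $p^+$-term dominate for small $\|u\|_e$ and produces $J_m(u) > J_m(0) = 0$, i.e.\ $\Phi_m(u) < \Phi_m(0)$, on $Y\setminus\{0\}$ close to the origin (after shrinking $\rho$ once more if needed). In case (a) the chained hypothesis $r^-\leq s^+$ together with $r^->p^+$ gives $\sigma_2 = r^- > p^+$ and $\sigma_1 = s^+ \geq r^- > p^+$; in case (b) one uses $r^-\leq s^-$ to reach the same conclusion. With the local linking established, Proposition \ref{Liu} applied to $\Phi_m$ produces at least three critical points, two of them nonzero, and Lemma \ref{Gateaux} translates these into three $m$-periodic solutions of (\ref{zad}), two of which are nontrivial. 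I expect no real obstacle beyond this symmetric bookkeeping of exponents; the abstract machinery is exactly the one used in the Case I proof.
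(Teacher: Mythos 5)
Your proposal is correct and follows exactly the route the paper intends: the paper states this Case II theorem without proof, leaving it as the symmetric analogue of the Case I argument, and your interchange of the roles of $s$ and $r$ (with the exponent comparisons $\sigma_1,\sigma_2\geq r^->p^+$ justifying the domination of the $\|u\|_e^{p^+}$ term near the origin) is precisely that analogue. The treatment of $W$, the use of Lemma \ref{anti-coercive} for coercivity of $\Phi_m$, and the appeal to Proposition \ref{Liu} all match the paper's Case I proof verbatim.
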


For $r^{-}=p^{+}$ with $s^{-}<p^{+}$ we can observe that

\begin{corollary}
Let $r^{-}=p^{+}$ with $s^{-}<p^{+}$ and let $\lambda \in (\lambda
_{2},+\infty ).$ Assume that conditions \textbf{(A.1)-(A.5) }and condition%
\textbf{\ (A.6.1)} with $r^{-}\leq s^{+}$ are satisfied.\newline
Then problem (\ref{zad}) has at least three $m$-periodic solutions, two of
which are nontrivial.
\end{corollary}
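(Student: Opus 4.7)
The plan is to apply Proposition \ref{Liu} to $\Phi_m = -J_m$, mirroring the proof of the Case II theorem but replacing the anti-coercivity input with part (b) of the preceding anti-coercivity lemma (which is calibrated exactly for $r^- = p^+$, $s^- < p^+$, $\lambda > \lambda_2$). The functional $\Phi_m$ must be shown to (i) have a local linking at $0$ with respect to $H_m = Y \oplus W$, (ii) be bounded from below, and (iii) satisfy the Palais--Smale condition.

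First I would verify the local linking on the $W$-side: every $u \in W$ is a constant sequence, so $\Delta u(k-1) = 0$ and $J_m(u) = -\lambda \sum_{k=1}^{m} F(k, u(k+1), u(k))$. Choosing $\rho \leq \eta$ ensures $|u(k+1)| + |u(k)| \leq 2\eta$, and (A.5) together with (A.3) yields $J_m(u) \leq 0 = J_m(0)$, i.e.\ $\Phi_m(u) \geq \Phi_m(0)$. On the $Y$-side I would imitate the Case II (a) argument: pick $\varepsilon < \xi/(2\lambda m p^+)$, use (A.5) and (A.6.1) to find $\rho \in (0, \min\{1,\eta\})$ with $F(k, u_1, u_2) \leq \varepsilon(|u_1|^{s^+} + |u_2|^{r^-})$ whenever $|u_1| + |u_2| \leq 2\rho$, and combine Remark \ref{ksip} with (C.1) to get $J_m(u) \geq \frac{\xi}{p^+}\|u\|_e^{p^+} - \lambda\varepsilon m(\|u\|_e^{s^+} + \|u\|_e^{r^-})$ for $u \in Y$, $0 < \|u\|_e \leq \rho$. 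The hypothesis $r^- \leq s^+$ gives $s^+ \geq r^- = p^+$, and since $\rho \leq 1$ both exponents can be bounded below by $p^+$, collapsing the estimate to $J_m(u) \geq \|u\|_e^{p^+}\bigl(\xi/p^+ - 2\lambda\varepsilon m\bigr) > 0$, hence $\Phi_m(u) < \Phi_m(0)$ on the punctured ball in $Y$.

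Next, part (b) of the anti-coercivity lemma applies since $\lambda \in (\lambda_2, +\infty)$, so $J_m$ is anti-coercive and $\Phi_m$ is coercive on the finite dimensional space $H_m$; continuity then gives boundedness from below and the Palais--Smale condition for free. Proposition \ref{Liu} therefore delivers at least three critical points of $\Phi_m$, which are critical points of $J_m$, and Lemma \ref{Gateaux} translates them into $m$-periodic solutions of (\ref{zad}); the local linking guarantees two of them are distinct from $0$.

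The main obstacle is the borderline exponent $r^- = p^+$ in the local linking on $Y$. Away from the borderline (Case II proper) the term $\|u\|_e^{r^-}$ is strictly dominated by $\|u\|_e^{p^+}$ on a small ball; here it is of the same order, so the coefficient must be absorbed into $\xi/p^+$ by shrinking $\varepsilon$, and the mixed term $\|u\|_e^{s^+}$ must be controlled on the unit ball via $s^+ \geq p^+$. This is precisely why the lower bound $\lambda > \lambda_2$ and the condition $r^- \leq s^+$ enter the statement.
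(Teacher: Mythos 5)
Your proposal is correct and follows exactly the route the paper intends for this corollary: the local linking argument of the Case II theorem (with \textbf{(A.6.1)} and $r^{-}\leq s^{+}$ handling the $Y$-side, \textbf{(A.3)}, \textbf{(A.5)} the $W$-side), with the anti-coercivity now supplied by part (b) of the borderline lemma for $\lambda>\lambda_{2}$ rather than by Lemma \ref{anti-coercive}. The paper leaves this proof implicit, and your filled-in details, including the absorption of the $\Vert u\Vert_{e}^{r^{-}}=\Vert u\Vert_{e}^{p^{+}}$ term via the choice $\varepsilon<\xi/(2\lambda mp^{+})$, are accurate.
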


In particular,$\ $ if $s^{-}=r^{-}=p^{+}$ we obtain

\begin{corollary}
Let $s^{-}=r^{-}=p^{+}$ and let $\lambda \in (\lambda _{3},+\infty ).$
Assume that conditions \textbf{(A.1)-(A.5)} and \textbf{(A.6.3) }are
satisfied. Then problem (\ref{zad}) has at least three $m-$periodic
solutions, two of which are nontrivial.$\bigskip $
\end{corollary}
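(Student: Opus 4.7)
The plan is to apply Proposition \ref{Liu} to the functional $\Phi_{m}=-J_{m}$, exactly as in the proof of the theorem from Case I, but using part (c) of the anti-coercivity lemma to handle the borderline exponents. Since $\lambda>\lambda_{3}$ and $s^{-}=r^{-}=p^{+}$, that lemma gives anti-coercivity of $J_{m}$, hence coercivity of $\Phi_{m}$. In the finite-dimensional space $H_{m}$ this immediately yields that $\Phi_{m}$ is bounded from below and satisfies the Palais--Smale condition (any PS sequence is bounded by coercivity, hence has a convergent subsequence). Lemma \ref{Gateaux} provides the $C^{1}$ regularity. It remains to verify the local linking at the origin with respect to the decomposition $H_{m}=Y\oplus W$.

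For the linking on $Y$, I would proceed as in the proof of the Case I theorem. Fix a positive $\varepsilon$ with $\varepsilon<\xi/(2\lambda m p^{+})$, where $\xi$ is as in Remark \ref{ksip}. By \textbf{(A.5)} and \textbf{(A.6.3)} choose $\rho\in(0,\min\{1,\eta\})$ such that
\begin{equation*}
0\leq F(k,u_{1},u_{2})\leq \varepsilon\bigl(|u_{1}|^{s^{-}}+|u_{2}|^{r^{-}}\bigr)\quad\text{whenever }|u_{1}|+|u_{2}|\leq 2\rho.
\end{equation*}
For $u\in Y$ with $0<\|u\|_{e}\leq \rho$, the pointwise bound $|u(k)|\leq \rho$ together with (\ref{ksi}) and \textbf{(C.1)} gives, since $s^{-}=r^{-}=p^{+}$,
\begin{equation*}
J_{m}(u)\geq \frac{\xi}{p^{+}}\|u\|_{e}^{p^{+}}-2\lambda\varepsilon m\|u\|_{e}^{p^{+}}=\|u\|_{e}^{p^{+}}\Bigl(\frac{\xi}{p^{+}}-2\lambda\varepsilon m\Bigr)>0=J_{m}(0),
\end{equation*}
so $\Phi_{m}(u)<\Phi_{m}(0)$. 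For $u\in W$ with $\|u\|_{e}\leq\rho$ (so $u$ is a constant sequence satisfying $|u(k+1)|+|u(k)|\leq 2\eta$), we have $\Delta u(k-1)=0$ for all $k$, hence
\begin{equation*}
J_{m}(u)=-\lambda\sum_{k=1}^{m}F(k,u(k+1),u(k))\leq 0=J_{m}(0),
\end{equation*}
by \textbf{(A.5)} and \textbf{(A.3)}, which gives $\Phi_{m}(u)\geq \Phi_{m}(0)$. Thus $\Phi_{m}$ has a local linking at $0$.

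All hypotheses of Proposition \ref{Liu} are then satisfied, so $\Phi_{m}$ admits at least three critical points, two of which are nonzero. By Lemma \ref{Gateaux} these correspond to three $m$-periodic solutions of (\ref{zad}), two of them nontrivial. The only genuinely delicate step is the linking estimate on $Y$: since $s^{-}=r^{-}=p^{+}$, the leading and the subtracted terms have the \emph{same} power $\|u\|_{e}^{p^{+}}$, so the argument hinges on the geometric constant $\xi>0$ from Remark \ref{ksip}, which exists only because $(\sum|\Delta u(k-1)|^{p^{+}})^{1/p^{+}}$ is a genuine norm on $Y$ (equivalent to $\|\cdot\|_{e}$ by finite-dimensionality); without this one could not absorb the nonlinear term into the $p^{+}$-Laplacian term.
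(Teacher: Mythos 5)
Your proof is correct and follows essentially the same route the paper intends for this corollary: apply Proposition \ref{Liu} to $\Phi_{m}=-J_{m}$, obtain anti-coercivity from part (c) of the borderline lemma (which is exactly where the restriction $\lambda>\lambda_{3}$ enters), and run the local linking estimates of the Case I theorem with \textbf{(A.6.3)} and $s^{-}=r^{-}=p^{+}$, absorbing the nonlinear term via the constant $\xi$ of Remark \ref{ksip}. No gaps.
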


\subsection{Result by the three critical point theorem}

\qquad In this section we provide multiplicity results for problem (\ref{zad}%
) using Lemma \ref{lemmaTHREEAPP}. Assume that $F$ apart from structure
conditions \textbf{(A.1)-(A.3)} has the following properties:\bigskip 
\newline
\textbf{(A.7) }\textit{There exists\ a constant }$C\in 
\mathbb{R}
$\textit{\ such that }%
\begin{equation*}
F\left( k,u_{1},u_{2}\right) \leq C\text{ for all }\left(
k,u_{1},u_{2}\right) \in 
\mathbb{Z}
\times \mathbb{%
\mathbb{R}
}^{n}\times \mathbb{%
\mathbb{R}
}^{n}\bigskip ;
\end{equation*}%
\textbf{(A.8)} \textit{There exists a number }$\rho _{1}>0$\textit{\ such
that}%
\begin{equation*}
F\left( k,u_{1},u_{2}\right) <0
\end{equation*}%
\textit{\ for all }$k\in 
\mathbb{Z}
$\textit{\ and all }$u_{1},u_{2}\in 
\mathbb{R}
^{n}$\textit{\ for which }$0<\left\vert u_{1}\right\vert \leq \rho _{1}$%
\textit{, }$0<\left\vert u_{2}\right\vert \leq \rho _{1};\bigskip $\newline
\textbf{(A.9) }\textit{There exist numbers }$\rho _{3},\rho _{2}$\textit{\
such that }$\rho _{3}\geq \rho _{2}>\rho _{1}$\textit{\ and}%
\begin{equation*}
F\left( k,u_{1},u_{2}\right) >0
\end{equation*}%
\textit{\ for all }$k\in 
\mathbb{Z}
$\textit{\ and all }$u_{1},u_{2}\in 
\mathbb{R}
^{n}$\textit{\ for which }$\rho _{2}<\left\vert u_{1}\right\vert \leq \rho
_{3}$\textit{, }$\rho _{2}<\left\vert u_{2}\right\vert \leq \rho
_{3}.\bigskip $

\begin{theorem}
\label{C2C4}Assume that conditions \textbf{(A.1)-(A.3) }and\textbf{\
(A.7)-(A.9)} hold. Then\ there exists a nonempty open set $A\subseteq
(0,+\infty )$\ such \ that for all $\lambda \in A$\ problem (\ref{zad}) has
at least three solutions in $Y$, two of which are necessarily non-zero.
\end{theorem}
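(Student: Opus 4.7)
The plan is to apply Lemma~\ref{lemmaTHREEAPP} on the finite-dimensional Hilbert space $(Y,\|\cdot\|_e)$ with the choice
\[
\mu(u)=\sum_{k=1}^m\frac{1}{p(k-1)}|\Delta u(k-1)|^{p(k-1)},\qquad J(u)=-\sum_{k=1}^m F(k,u(k+1),u(k)),
\]
so that $\mu+\lambda J$ coincides with the restriction of $J_m$ to $Y$. First I would verify the structural requirements: $\mu\in C^1(Y,\mathbb{R}_+)$ is coercive via Remark~\ref{ksip} and equivalence of norms on the finite-dimensional $Y$, while strict convexity of each summand makes $\mu'$ strictly monotone, hence a homeomorphism onto its image in $Y^*$. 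The functional $J$ is $C^1$ since $F\in C^1$, and \textbf{(A.7)} directly gives $J(u)\geq -mC$ on $Y$, supplying the bounded-below hypothesis \textbf{(B.4)}.

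Next, I would construct $\widetilde{x}\in Y$ and $r>0$ satisfying \textbf{(B.2)} and \textbf{(B.3)}. Take $\widetilde{x}=0$, so $\mu(\widetilde{x})=0$ and, by \textbf{(A.3)}, $J(\widetilde{x})=0$. Using the pointwise estimate $|u(k)|^2\leq\sum_i|\Delta u(i)|^2$ from the proof of \textbf{(C.1)}, one selects $r>0$ small enough that $\mu(u)\leq r$ forces $|u(k)|\leq\rho_1$ for every $k$. On this neighbourhood, \textbf{(A.8)} together with continuity of $F$ and \textbf{(A.3)} gives $F(k,u(k+1),u(k))\leq 0$, hence $J(u)\geq 0=J(0)$ and $\inf_{\mu(u)\leq r}J(u)=0$. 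Condition \textbf{(B.2)} then reduces to producing some $u^*\in Y$ with $J(u^*)<0$, which \textbf{(A.9)} provides: choose $u^*$ with $|u^*(k)|\in(\rho_2,\rho_3]$ at every index via an alternating-type pattern compatible with the mean-zero constraint on $Y$, so each summand $F(k,u^*(k+1),u^*(k))$ is strictly positive.

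The hard step will be the strict inequality $J(0)=0<\inf_{\mu(u)=r}J(u)$ demanded by \textbf{(B.3)}. Compactness of $\{\mu=r\}$ in $Y$ yields a minimizer $u^\sharp$; if $J(u^\sharp)=0$ then every $F(k,u^\sharp(k+1),u^\sharp(k))$ must vanish, and the strict inequality in \textbf{(A.8)} forces $u^\sharp(k)=0$ or $u^\sharp(k+1)=0$ at every index. For odd $m$ such an alternating-zero configuration around the cycle is impossible on a nonzero sequence, and the strict inequality is immediate; for even $m$ one must argue more carefully, for instance by adjusting $r$ so the sphere avoids the distinguished one-parameter family of alternating-zero sequences, or by replacing $\widetilde{x}=0$ with a small nonzero perturbation supplied by \textbf{(A.8)}.

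Once \textbf{(B.2)}, \textbf{(B.3)} and \textbf{(B.4)} are established, Lemma~\ref{lemmaTHREEAPP} produces a nonempty open set $A\subseteq(0,+\infty)$ such that for every $\lambda\in A$ the functional $J_m|_Y=\mu+\lambda J$ possesses at least three critical points in $Y$. Since $\inf_Y J<0=J(0)$, the zero vector cannot be the global minimizer, so at least two of the three critical points differ from $0$. Lemma~\ref{Gateaux} then identifies them with the desired $m$-periodic solutions of (\ref{zad}).
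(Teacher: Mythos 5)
Your overall strategy coincides with the paper's own proof: restrict to $Y$, take the same $\mu$ and $J$, obtain coercivity of $\mu$ on $Y$ from equivalence of norms, use \textbf{(A.7)} for \textbf{(B.4)}, \textbf{(A.9)} to get $\inf_Y J<0$, \textbf{(A.8)} together with \textbf{(A.3)} to get $J\ge 0$ on a small sublevel set of $\mu$, choose $\widetilde{x}=0$, and conclude via Lemma \ref{lemmaTHREEAPP}. Up to the verification of \textbf{(B.3)} this is a faithful reconstruction of the published argument.

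The gap is exactly where you place it, but your two proposed repairs do not work and your parity claim is false. A nonzero $u\in Y$ with no two cyclically adjacent nonzero entries exists for every $m\ge 4$, odd or even: for $m=5$ take $u=(a,0,-a,0,0)$; only $m=2,3$ exclude such configurations, since in a $2$- or $3$-cycle any two indices are adjacent and a nonzero element of $Y$ must have at least two nonzero entries. Moreover, the set of these degenerate sequences is a finite union of linear subspaces of $Y$, hence a cone on which $\mu$ takes every sufficiently small positive value, so every level set $\{\mu=r\}$ with small $r>0$ meets it and no choice of $r$ avoids it. Finally, once $J\ge 0$ is established on all of $\{\mu\le r\}$, every admissible $\widetilde{x}$ (i.e. with $\mu(\widetilde{x})<r$) satisfies $J(\widetilde{x})\ge 0$, so if $\inf_{\mu(u)=r}J(u)=0$ then no perturbation of $\widetilde{x}$ can rescue \textbf{(B.3)}. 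Whether that infimum actually vanishes is governed by the values $F(k,u_1,0)$ and $F(k,0,u_2)$ for small nonzero arguments, which \textbf{(A.8)} (both arguments nonzero) and \textbf{(A.3)} (both zero) control only up to the sign $\le 0$ forced by continuity. To close the argument one should either strengthen \textbf{(A.8)} to hold for $0<\left\vert u_1\right\vert +\left\vert u_2\right\vert \le 2\rho_1$, or restrict to $m\in\{2,3\}$. For what it is worth, the paper's proof simply asserts $0=J(0)<\inf_{\mu(u)=r}J(u)$ at this point, so you have isolated a genuine soft spot in the published argument rather than introduced a new one; but your patches as written should be deleted or replaced by an explicit extra hypothesis.
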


\begin{proof}
We will consider problem (\ref{zad}) in $Y$ since on $H_{m}$ functional $\mu 
$ given by 
\begin{equation*}
\mu (u)=\sum_{k=1}^{m}\frac{1}{p(k-1)}|\Delta u(k-1)|^{p(k-1)}
\end{equation*}%
is not coercive. Thus $\mu :Y\rightarrow 
\mathbb{R}
$ and we define $J:Y\rightarrow 
\mathbb{R}
$ by 
\begin{equation*}
J(u)=-\sum_{k=1}^{m}F(k,u(k+1),u(k)).
\end{equation*}%
Then $J_{m}(u)=\mu (u)+\lambda J(u)$ on $Y$. We will show that $\mu $ is
coercive on $Y$. Let us take $u\in Y$ \ with$\ \left\Vert u\right\Vert
_{p^{-}}^{p^{-}}>1$. Recalling Remark \ref{ksip} we have 
\begin{equation*}
\begin{array}{l}
\mu (u)\geq \frac{1}{p^{+}}\left( \dsum\limits_{\left\{ k\in \left[ 1,m%
\right] :|\Delta u(k-1)|\leq 1\right\} }|\Delta u(k-1)|^{p(k-1)}\right.
+\bigskip \\ 
\left. \dsum\limits_{\left\{ k\in \left[ 1,m\right] :|\Delta
u(k-1)|>1\right\} }|\Delta u(k-1)|^{p(k-1)}\right) \geq \bigskip \bigskip \\ 
\frac{1}{p^{+}}\left( \dsum\limits_{k=1}^{m}|\Delta
u(k-1)|^{p-}-\dsum\limits_{k=1}^{m}1\right) \geq \frac{1}{p^{+}}\left\Vert
u\right\Vert _{p^{-}}^{p^{-}}-\frac{1}{p^{+}}m.\bigskip%
\end{array}%
\end{equation*}%
Thus we see that $\mu $ is coercive on $Y$.$\bigskip $

By \textbf{(A.7)} we find that $J$ is bounded from below on $Y$. From 
\textbf{(A.9)} it follows that 
\begin{equation*}
-F\left( k,u(k+1),u(k)\right) <0\text{ }
\end{equation*}%
for all $u\in Y$ such that $\rho _{2}<\left\vert u(k)\right\vert \leq \rho
_{3},$ for all $k\in $ $%
\mathbb{Z}
.$ This means, that there exists a point $u\in Y$ such that $J(u)<0.$ Hence%
\begin{equation*}
\underset{u\in Y}{\inf }J(u)<0.
\end{equation*}%
For all $u\in Y$ satisfying \textbf{(A.8)} it is clear that $\left\vert
u(k)\right\vert \leq \rho _{1}$ \ for all $k\in 
\mathbb{Z}
.$ So $\left\vert \Delta u(k-1)\right\vert \leq 2\rho _{1}$ \ for all $k\in 
\mathbb{Z}
$ and consequently%
\begin{equation*}
\frac{1}{p(k-1)}\left\vert \Delta u(k-1)\right\vert ^{p(k-1)}\leq \frac{1}{%
p(k-1)}\left( 2\rho _{1}\right) ^{p(k-1)}\text{ \ for all }k\in 
\mathbb{Z}
.
\end{equation*}%
Thus $\mu (u)\leq r_{2},$ where $r_{2}=\dsum\limits_{k=1}^{m}\frac{1}{p(k-1)}%
\left( 2\rho _{1}\right) ^{p(k-1)}.$ Since $\mu $ is continuous, coercive,
non-negative and $\mu (0)=0$, we get $r\in (0,r_{2})$ such that\ $J(u)\geq 0$%
\ for $\mu (u)\leq r$, by \textbf{(A.3) }and \textbf{(A.8)}$.$ Therefore 
\textbf{(B.2)}\ is satisfied. Now, putting $\widetilde{u}=0$ we observe that 
\begin{equation*}
0=\mu (0)<r\text{ and }0=J\left( 0\right) <\underset{\mu (u)=r}{\inf }J(u).
\end{equation*}%
Hence, condition \textbf{(B.3)}\ is satisfied. Thus, by Lemma \ref%
{lemmaTHREEAPP} we see that there exists a nonempty open set $A\subseteq
(0,+\infty )$\ such \ that for all $\lambda \in A$\ the functional $J_{m}$
has at least three critical points on $Y$. Since by Lemma \ref{Gateaux}
critical points of $J_{m}$ are solutions of problem (\ref{zad}), we get the
assertion. $\bigskip $
\end{proof}

Since we obtain solutions in $Y$ we know that these are not constant
functions.

Now, we give an example to illustrate Theorem \ref{C2C4}.

\begin{example}
Let $m\geq 2$ be a fixed natural number. Let\ us consider the function $F:%
\mathbb{Z}
\times 
\mathbb{R}
^{n}\times 
\mathbb{R}
^{n}\rightarrow 
\mathbb{R}
$ given by the formula%
\begin{equation*}
F(k,u_{1},u_{2})=-\sin (u_{1}^{2}+u_{2}^{2})\left\vert \sin \left( \frac{k}{m%
}\pi \right) \right\vert
\end{equation*}%
which satisfies conditions \textbf{(A.1)}-\textbf{(A.3)} and \textbf{(A.7)}-%
\textbf{(A.9)}.
\end{example}

\section{Final comments}

There are other abstract theorems which pertain to the existence of three
critical points requiring different sets of assumptions, see for example 
\cite{bonano-candito}, Theorem 2.1, \cite{bonano-candito2}, Theorem 3.3 and 
\cite{bonano-marano}, Theorem 2.6. However the applicability of these
results due to the structure of discrete $p\left( k\right) -$Laplacian being
different from this of $p-$Laplacian seems to be much more difficult.

Much more suitable seems the multiplicity Theorem 2.3 from \cite%
{bonano-candito}. We use notation from the proof of Theorem \ref{C2C4}.

Put%
\begin{equation*}
\lambda ^{\ast }=\left( \underset{\left( r>\inf_{X}\mu \right) }{\inf }\text{
}\underset{u\in \mu ^{-1}\left( (-\infty ,r)\right) }{\inf }\frac{\left( 
\underset{u\in \mu ^{-1}\left( (-\infty ,r)\right) }{\sup }J(u)\right) -J(u)%
}{r-\mu (u)}\right) ^{-1},
\end{equation*}%
where we read $\frac{1}{0}=+\infty $ if this case occurs.

\begin{theorem}
\label{bon-can}\cite{bonano-candito} Let $(X,\left\Vert .\right\Vert )$ be a
finite dimensional Banach space and $\mu ,$ $J\in C^{1}(X,%
\mathbb{R}
)$ with $\mu $ coercive. Assume that the functional $\mu -\lambda J$ is
anticoercive on $X$ for all $\lambda \in (0,\lambda ^{\ast })$. Then, for
all $\lambda \in (0,\lambda ^{\ast })$ \ the functional $\mu -\lambda J$
admits at least three distinct critical points.
\end{theorem}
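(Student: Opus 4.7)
The plan is to use the definition of $\lambda^{\ast}$ to produce a local minimum of $\Phi := \mu - \lambda J$ inside a sublevel set of $\mu$, and then to combine the anticoercivity of $\Phi$ with a global maximization and a mountain-pass argument to obtain three distinct critical points.

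Fix $\lambda \in (0, \lambda^{\ast})$. Unwinding the definition of $\lambda^{\ast}$, there exist $r > \inf_{X} \mu$ and $u_{0} \in X$ with $\mu(u_{0}) < r$ satisfying
\[
\lambda \Bigl( \sup_{\mu(u) < r} J(u) - J(u_{0}) \Bigr) < r - \mu(u_{0}),
\]
which rearranges to $\Phi(u_{0}) < r - \lambda \sup_{\mu(u) < r} J(u) \le \Phi(v)$ for every $v$ with $\mu(v) = r$. Since $\mu$ is continuous and coercive and $X$ is finite-dimensional, the sublevel set $\mu^{-1}((-\infty, r])$ is compact, so $\Phi$ attains its infimum on this compactum at some $\bar u$; the separation inequality forces $\mu(\bar u) < r$, so $\bar u$ lies in the open interior of the sublevel set and is therefore a local minimum of $\Phi$ on $X$. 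This yields the first critical point.

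Because $\Phi$ is anticoercive and continuous on the finite-dimensional space $X$, the continuous functional $-\Phi$ is coercive and attains its global minimum at some $u^{\ast}$, a global maximum of $\Phi$ and hence a second critical point, distinct from the local minimum $\bar u$. For the third critical point, anticoercivity provides $w \in X$ with $\Phi(w) < \Phi(\bar u)$; assuming $\bar u$ is isolated (otherwise infinitely many critical points already accumulate near $\bar u$), a mountain-pass geometry is realized between $\bar u$ and $w$. The Palais-Smale condition is automatic here: any Palais-Smale sequence is bounded by anticoercivity of $\Phi$ and admits a convergent subsequence by finite-dimensionality of $X$. The mountain-pass theorem then yields a critical point $\tilde u$ at the min-max level $c > \Phi(\bar u)$, so $\tilde u \ne \bar u$.

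The main obstacle is to guarantee that $\tilde u$ is genuinely distinct from $u^{\ast}$ in the degenerate case when the min-max level $c$ happens to coincide with $\Phi(u^{\ast}) = \sup_{X} \Phi$. In the generic case $c < \Phi(u^{\ast})$ distinctness is immediate, but when equality holds one must invoke a refinement of the mountain-pass theorem in the spirit of Pucci--Serrin or Ghoussoub--Preiss, which yields a critical point of saddle type at level $c$ topologically different from any local maximum. Combined with the preceding construction this produces the required three distinct critical points of $\mu - \lambda J$.
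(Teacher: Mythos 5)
The paper does not prove this statement; it is quoted verbatim from \cite{bonano-candito} (their Theorem 2.3), so there is no in-paper argument to compare yours against and I assess your proposal on its own merits. Your first two critical points are obtained essentially the way the cited source obtains them: the inequality $\lambda\bigl(\sup_{\mu<r}J-J(u_{0})\bigr)<r-\mu(u_{0})$ coming from $\lambda<\lambda^{\ast}$ does force a local minimum $\bar u$ of $\Phi=\mu-\lambda J$ with $\mu(\bar u)<r$, and anticoercivity gives a global maximizer $u^{\ast}$ and the Palais--Smale condition. One small repair: your boundary estimate $r-\lambda\sup_{\mu<r}J\le\Phi(v)$ for $\mu(v)=r$ needs $J(v)\le\sup_{\mu<r}J$, which only follows from continuity when $v\in\overline{\mu^{-1}((-\infty,r))}$; a point of $\{\mu=r\}$ that is, say, an isolated local minimum of $\mu$ at level $r$ need not lie in that closure. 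The fix is to minimize $\Phi$ over the compact set $\overline{\mu^{-1}((-\infty,r))}$ rather than over $\mu^{-1}((-\infty,r])$; then every minimizer is a limit of points of the open sublevel set and the contradiction goes through.

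The genuine gap is in the third critical point. Your mountain-pass level $c=\inf_{\gamma}\max_{t}\Phi(\gamma(t))$ satisfies $c\le\sup_{X}\Phi=\Phi(u^{\ast})$, and in the degenerate case $c=\Phi(u^{\ast})$ your proposed escape --- that a Ghoussoub--Preiss refinement produces a critical point at level $c$ of ``saddle type, topologically different from any local maximum'' --- is not a theorem and is in fact false. Take $X=\mathbb{R}$ and $\Phi(x)=x^{2}-x^{4}$ (anticoercive, satisfies (PS), local minimum at $\bar u=0$ which is not global). For $w=2$ every path from $0$ to $2$ covers $[0,2]$, so $c=\tfrac14=\sup_{\mathbb{R}}\Phi$, and the critical set at level $c$ is exactly $\{\pm 1/\sqrt{2}\}$, consisting solely of global maxima: there is no saddle-type critical point at level $c$ to be found, and the point detected by any min--max scheme may well coincide with the $u^{\ast}$ you already have. (The theorem is still true here because there happen to be two global maxima, but your argument does not see the second one.) To close the gap you must treat the case $c=\sup_{X}\Phi$ by a different mechanism --- e.g.\ observe that then every path from $\bar u$ to $w$ meets the critical set $\{\Phi=\sup_{X}\Phi\}$, which therefore must contain more than one point when $\dim X\ge 2$ (otherwise remove the single point and connect $\bar u$ to $w$ in the complement), with a separate one-dimensional argument using anticoercivity in both directions --- or simply defer, as the paper does, to the proof in \cite{bonano-candito}.
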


Let us compare results from Subsection 4.2 the above result from \cite%
{bonano-candito}. Firstly, we seem to have much more precise eigenvalue
intervals. Next, these eigenvalue intervals are relatively easy to be
determined. We however underline that in the application of Theorem 2.3 from 
\cite{bonano-candito} no condition around $0$ is required but we are
confined to the case when the action functional is anti-coercive for all $%
\lambda >0$ since it is rather difficult to calculate $\lambda ^{\ast }$
directly. So both approaches have their own advantages and
disadvantages.\bigskip

The multiplicity results obtain by using Theorem \ref{bon-can} reads

\begin{theorem}
Suppose that conditions \textbf{(A.1)-(A.2) }are satisfied. Assume that 
\textbf{(A.4)} holds with either $s^{-}>p^{+}$ or $\ r^{-}>p^{+}.$ Then for
all $\lambda \in \left( 0,\lambda ^{\ast }\right) $ problem (\ref{zad}) has
at least three distinct solutions in $Y$.
\end{theorem}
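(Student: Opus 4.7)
The plan is to apply Theorem \ref{bon-can} in the finite dimensional Banach space $X = Y$, reusing the framework set up in the proof of Theorem \ref{C2C4}. I would take
\begin{equation*}
\mu(u) = \sum_{k=1}^{m}\frac{1}{p(k-1)}|\Delta u(k-1)|^{p(k-1)}, \qquad \tilde{J}(u) = \sum_{k=1}^{m} F(k,u(k+1),u(k)),
\end{equation*}
both viewed as $C^{1}$ functionals on $Y$ (smoothness is immediate from the regularity of $F$ and a computation analogous to Lemma \ref{Gateaux}). The restricted action functional then reads $J_{m} = \mu - \lambda\tilde{J}$ on $Y$, matching the sign convention of Theorem \ref{bon-can}; the quantity $\lambda^{\ast}$ in the statement is to be understood as the one produced by Theorem \ref{bon-can} for this pair $(\mu,\tilde{J})$.

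The hypotheses of Theorem \ref{bon-can} split into two verifications. Coercivity of $\mu$ on $Y$ was already established in the proof of Theorem \ref{C2C4} via Remark \ref{ksip}, where the bound $\mu(u) \geq (1/p^{+})\|u\|_{p^{-}}^{p^{-}} - m/p^{+}$ on $\{u \in Y : \|u\|_{p^{-}}^{p^{-}} > 1\}$, combined with equivalence of norms on the finite dimensional space $Y$, does the job. Anticoercivity of $\mu - \lambda\tilde{J} = J_{m}$ on $Y$ for each $\lambda \in (0,\lambda^{\ast})$ is the substantive hypothesis, and it is precisely what Lemma \ref{anti-coercive} provides: under \textbf{(A.4)} with $s^{-} > p^{+}$ or $r^{-} > p^{+}$, the functional $J_{m}$ is anticoercive on the whole of $H_{m}$ for \emph{every} $\lambda > 0$, and restriction to $Y \subset H_{m}$ (with the inherited norm) preserves this property. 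Theorem \ref{bon-can} then furnishes three distinct critical points of $J_{m}$ on $Y$ for every $\lambda \in (0,\lambda^{\ast})$, which by Lemma \ref{Gateaux} correspond to $m$-periodic solutions of (\ref{zad}).

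I do not expect a serious obstacle: the argument is essentially a bookkeeping exercise chaining together previously established results. The two mildly delicate points are, first, aligning sign conventions so that Theorem \ref{bon-can} applies to $\mu - \lambda\tilde{J}$ rather than to the paper's earlier convention $J_{m} = \mu + \lambda J$ used in the proof of Theorem \ref{C2C4} (resolved simply by setting $\tilde{J} = -J$); and, second, the passage from critical points of $J_{m}|_{Y}$ to solutions of (\ref{zad}) proper. This latter step is handled exactly as in the proof of Theorem \ref{C2C4}: one is forced to work on $Y$ because $\mu$ fails to be coercive on all of $H_{m}$ (constants give $\mu \equiv 0$), and Lemma \ref{Gateaux} is then invoked to identify critical points with $m$-periodic solutions of the original system.
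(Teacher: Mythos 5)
Your argument is exactly the one the paper intends: the theorem is stated there without an explicit proof, immediately after Theorem \ref{bon-can}, and the intended justification is precisely the assembly you give (coercivity of $\mu$ on $Y$ as established in the proof of Theorem \ref{C2C4}, anticoercivity of $J_{m}$ on $Y$ for every $\lambda>0$ from Lemma \ref{anti-coercive} under \textbf{(A.4)} with $s^{-}>p^{+}$ or $r^{-}>p^{+}$, and the identification of critical points with $m$-periodic solutions via Lemma \ref{Gateaux}). Your observation about the sign convention, namely taking $\tilde{J}=-J$ so that $J_{m}=\mu-\lambda\tilde{J}$ matches the form required by Theorem \ref{bon-can}, is the correct reading of the paper's definition of $\lambda^{\ast}$.
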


\begin{tabular}{l}
Marek Galewski, Renata Wieteska \\ 
Institute of Mathematics, \\ 
Technical University of Lodz, \\ 
Wolczanska 215, 90-924 Lodz, Poland, \\ 
marek.galewski@p.lodz.pl, renata.wieteska@p.lodz.pl%
\end{tabular}

\end{document}